\DeclareMathAlphabet{\mathpzc}{OT1}{pzc}{m}{it}
\newtheorem{thm}{Theorem}[section]
\newtheorem{lem}[thm]{Lemma}
\newtheorem{prop}[thm]{Proposition} 
\newtheorem{cor}[thm]{Corollary}
\newtheorem{rem}[thm]{Remark}
\newtheorem{ex}[thm]{Example}
\newcommand{\m}{\mathpzc{m}}
\newcommand{\p}{\mathpzc{p}}
\newcommand{\bQ}{\mathbb Q}
\newcommand{\bR}{\mathbb R}
\newcommand{\td}{\operatorname{tr.deg}}
 \title{Nice derivations over principal ideal domains}
 \author{Nikhilesh Dasgupta and Neena Gupta\\
 {\small{\it Statistics and Mathematics  Unit, Indian Statistical Institute,}}\\
 {\small{\it 203 B.T. Road, Kolkata 700 108, India}}\\
 {\small{\it e-mail:  its.nikhilesh@gmail.com}}\\
 {\small{\it e-mail : neenag@isical.ac.in, rnanina@gmail.com}}}
\begin{document}
\date{}
\maketitle

\abstract{In this paper we investigate to what extent the results of Z. Wang and D. Daigle on ``nice derivations'' 
of the polynomial ring $k[X,Y,Z]$ over a field 
$k$ of characteristic zero extend to the polynomial ring $R[X,Y,Z]$ over a PID $R$, containing the field of rational numbers. 
One of our results shows that the kernel of a nice derivation on $k[X_1,X_2,X_3,X_4]$ of rank at most three 
is a polynomial ring over $k$.

\smallskip

 \noindent
 {\small {{\bf Keywords}. Polynomial Rings, Locally Nilpotent Derivation, Nice Derivation.}
 
 \noindent
 {\small {{\bf 2010 MSC}. Primary: 13N15; Secondary: 14R20, 13A50.}}
 }

\section{Introduction}
By a ring, we will mean a commutative ring with unity. 
Let $R$ be a ring and $n(\geqslant 1)$ be an integer. For an $R$-algebra $A$, we use the notation $A=R^{[n]}$ to 
denote that $A$ is isomorphic to a polynomial ring in $n$ variables over $R$. We denote the group of units of $R$ by $R^{*}$.

Let $k$ be a field of characteristic zero, $R$ a $k$-domain, $B:=R^{[n]}$ and $m$ is a positive integer $\leq n$. In this
paper, we consider locally nilpotent derivations $D$ on $B$, which satisfy $D^{2}(T_i)=0$ for all 
$i \in \{1,\dots,m\} \subseteq \{1,\dots,n\}$ for some coordinate system $(T_1,T_2, \dots ,T_n)$ of $B$. For convenience, we 
shall call such a derivation $D$ as a {\it quasi-nice derivation}. In the case $m=n$, such a $D$ is called a 
{\it nice derivation} (Thus a nice derivation is also a quasi-nice derivation). We investigate the 
rank of $D$ when $n=3$ and $R$ is a PID (see Section $2$ for the definition of rank of $D$). 

The case when $B=k^{[3]}$ was investigated by Z. Wang in \cite{W}. He showed that $rank$ $D$ is less than 3 for
the cases $(m,n)=(2,3),(3,3)$ and that $rank$ $D$ $=1$ when  $D$ is a nice derivation
(i.e., for $(m,n)=(3,3)$). 
In \cite{Da}, Daigle proved that the rank of $D$ is less than $3$ even in 
the case $(m,n)=(1,3)$ (\cite[Theorems 5.1 and 5.2]{Da}). 

Now let $R$ be a Noetherian domain containing $\bQ$, say $R$ is regular. 
It is natural to ask  how far we can extend the above results to $R[X,Y,Z](=R^{[3]})$. 
In particular, we consider the following question for nice derivations.

\smallskip

\noindent
{\bf Question 1.} {\it If $D$ is a nice derivation of $R[X,Y,Z]$, then is rank $D$ $=1$, or, at least, is rank $D$ $<3$?}

\smallskip
 
In Section $3$, we show that when $R$ is a PID, the rank of $D$ is indeed less than 3 (Theorem \ref{PID})
and construct a nice derivation $D$ over $k^{[1]}$ with $rank$ $D=2$ (Example \ref{PIDex}). 
Moreover, we construct a nice derivation $D$ over $k^{[2]}$ with $rank$ $D=3$ (Example \ref{UFDex}) showing that
Theorem \ref{PID} does not extend to two-dimensional regular or factorial domains.

An important open problem in Affine Algebraic Geometry asks whether the kernel of any $D\in LND_{k}(k^{[4]})$ is 
necessarily finitely generated. In the case when $rank~D \leq3$, Bhatwadekar and Daigle had shown that the kernel 
is indeed finitely generated \cite[Theorem 1]{BD}. However Daigle and Freudenburg had constructed an example to 
show that the kernel need not be $k^{[3]}$ \cite{DF01}. Under the additional hypothesis that the kernel 
is regular, Bhatwadekar, Gupta and Lokhande showed that the kernel is indeed $k^{[3]}$ \cite[Theorem 3.5]{BGL}. 
A consequence of Theorem \ref{PID} of our paper is that in the case 
$rank~D \leq3$, the kernel of any nice derivation $D$ is necessarily $k^{[3]}$ (Corollary \ref{4variables}).

The following question on quasi-nice derivations arises in view of Wang's result that $rank$ $D$ is less than $3$ 
for $(m,n)=(2,3)$. 

\smallskip

\noindent
{\bf Question 2.} {\it  If $D$ is a locally nilpotent derivation of $R[X,Y,Z]$, such that $D$ is irreducible 
and $D^{2}X=D^{2}Y=0$, is then $rank$ $D$ $<3?$
}

\smallskip

In Section $4$, we investigate this question and obtain some partial results when $R$ is a PID (Proposition \ref{partI}) and 
a Dedekind domain (Proposition \ref{dd2}). Example \ref{2var} shows that Question 2 has a negative answer in general, even 
when $R$ is a PID. We shall also construct a strictly $1$-quasi derivation (defined in Section $4$) on 
$R^{[3]}$ over a PID $R$ (Example \ref{1var}). By a result of Daigle (quoted in Section $4$ as Theorem \ref{d10}), 
there does not exist such a derivation on $k^{[3]}$, where $k$ is a field of characteristic zero.

\section{Preliminaries}

For a ring $A$ and a nonzerodivisor 
$f \in A$, we use the notation $A_f$ to denote the localisation of $A$ with respect to the multiplicatively closed 
set $\{1,f,f^2,\dots\}$.

Let $A \subseteq B$ be integral domains. Then the 
transcendence degree of the field of fractions of $B$ over that of $A$ is denoted by $\td_{A}{B}$.

A subring $A \subseteq B$ is defined to be {\it factorially closed} in $B$ if, given nonzero 
$f,g \in B$, the condition $fg \in A$ implies $f \in A$ and $g \in A$. When the ambient ring $B$ is understood, 
we will simply say that $A$ is factorially closed. A routine verification shows that a factorially closed subring 
of a UFD is a UFD. If $A$ is a factorially closed subring of $B$, then $A$ is algebraically closed in $B$; 
further if $S$ is a multiplicatively closed set in $A$ then $S^{-1}A$ is a factorially closed subring of $S^{-1}B$.

Let $k$ be a field of characteristic zero, $R$ a $k$-domain, and $B$ an $R$-domain. The set of locally nilpotent 
$R$-derivations of $B$ is denoted by $LND_{R}(B)$. When $R$ is understood from the context (e.g. when $R=k$), 
we simply denote it by $LND(B)$. We denote the kernel of a locally nilpotent derivation $D$ by $Ker$ $D$.

Let $D\in LND_{R}(B)$ and $A:=Ker$ $D$. It is well-known that $A$ is a factorially closed 
subring of $B$ \cite[1.1(1)]{DF}. For any multiplicatively closed subset $S$ of 
$A \setminus \{0\}$, $D$ extends to a locally nilpotent derivation of $S^{-1}B$ with kernel $S^{-1}A$ 
and $B \cap S^{-1}A=A$ \cite[1.1(2)]{DF}. Moreover if $D$ is non-zero, then $\td_{A}B=1$ \cite[1.1(4)]{DF}.
A locally nilpotent derivation $D$ is said to be {\it reducible} 
if there exists a non-unit $b \in B$ such that $DB \subseteq (b)B$; otherwise 
$D$ is said to be {\it irreducible}. An element $s \in B$ is called a {\it slice} if $Ds=1$, and a 
{\it local slice} if $Ds \in A$ and $Ds \neq 0$. Moreover $D$ is said to be {\it fixed-point free} if the $B$ ideal $(DB)=B$. 

When $B:=R^{[n]}$ and $D \in LND_{R}(B)$, the {\it rank} of $D$, denoted by $rank$ $D$, is defined 
to be the least integer $i$ for which
there exists a coordinate system $(X_1,X_2,\dots,X_n)$ of $B$ satisfying $R[X_{i+1},\dots,X_n]\subseteq A$.

Now let $B$ be a $k$-domain and $D$ an element of $LND(B)$ with a local slice $r \in B$. 
The Dixmier map induced by $r$ is defined to be  
the $k$-algebra homomorphism 
$\pi_r:B \rightarrow B_{Dr}$, given by 
\begin{equation*}
\pi_r(f)=\sum_{i\geqslant 0}\frac{(-1)^i}{i!}D^{i}f\frac{r^i}{(Dr)^i}.
\end{equation*}

The following important result is known as the Slice Theorem \cite[Corollary 1.22]{ST}.
\begin{thm}\label{st}
 Let $k$ be a field of characteristic zero and $B$ a $k$-domain. 
 Suppose $D \in LND(B)$ admits a slice $s\in B$, and let $A=Ker$ $D$. Then 
 \begin{enumerate}
  \item[\rm (a)]$B=A[s]$ and $D=\frac{\partial}{\partial s}$.
  \item[\rm (b)]$A=\pi_{s}(B)$ and $Ker$ $\pi_{s}=sB$.
  \item[\rm (c)]If $B$ is affine, then $A$ is affine.
 \end{enumerate}
\end{thm}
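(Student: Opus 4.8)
The plan is to establish part (a) by the elementary ``degree calculus'' of locally nilpotent derivations and then to read off (b) and (c) from it. First I would record the identity $D^{j}(s^{j}) = j!$ for every $j \geq 0$, obtained by iterating $D(s^{j}) = j\,s^{j-1}$; this (and the ensuing division by $j!$) is the only place where $\operatorname{char} k = 0$ is genuinely used. Next, since $A = \operatorname{Ker} D$ is factorially closed, hence algebraically closed, in $B$, and $s \notin A$ because $Ds = 1$, the element $s$ is transcendental over $A$, so $A[s]$ is a polynomial ring over $A$ in one variable. To prove $B = A[s]$ I would take $b \in B$ and induct on the least $n$ with $D^{n+1}b = 0$: if $n = 0$ then $b \in A$, and if $n \geq 1$ then $D^{n}b \in A$ and the element $c := b - \tfrac{1}{n!}(D^{n}b)\,s^{n}$ satisfies $D^{n}c = D^{n}b - \tfrac{1}{n!}(D^{n}b)\,D^{n}(s^{n}) = 0$, so $c \in A[s]$ by the induction hypothesis and hence $b \in A[s]$. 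Finally $D$ and $\partial/\partial s$ are both $A$-derivations of $A[s]$ taking the value $1$ on the generator $s$, so they agree; this yields (a).

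For (b), since $Dr = Ds = 1$ we have $B_{Dr} = B$ and $\pi_{s}(f) = \sum_{i \geq 0}\tfrac{(-1)^{i}}{i!}(D^{i}f)\,s^{i}$. Using $B = A[s]$, I would write $f = \sum_{j} a_{j}s^{j}$ with $a_{j} \in A$; since $D^{i}(a_{j}s^{j}) = a_{j}\,\tfrac{j!}{(j-i)!}\,s^{j-i}$, substituting and using $\sum_{i=0}^{j}(-1)^{i}\binom{j}{i} = 0$ for $j \geq 1$ collapses the double sum to $\pi_{s}(f) = a_{0}$. Thus $\pi_{s}$ is precisely the $A$-algebra retraction $A[s] \to A$ sending $s$ to $0$, and therefore $\pi_{s}(B) = A$ while $\operatorname{Ker}\pi_{s} = s\,A[s] = sB$.

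For (c), suppose $B = k[f_{1},\dots,f_{m}]$ and expand each $f_{\ell} = \sum_{j} a_{\ell j}s^{j}$ with $a_{\ell j} \in A$. The finitely generated $k$-subalgebra $A_{0} := k[\,a_{\ell j} : \ell, j\,]$ of $A$ then satisfies $A_{0}[s] \supseteq B = A[s]$, hence $A_{0}[s] = A[s]$; comparing coefficients of powers of the transcendental element $s$ forces $A_{0} = A$, so $A$ is affine. (Equivalently, $A$ is a retract of the affine algebra $B$ via $\pi_{s}$, and a retract of a finitely generated $k$-algebra is finitely generated.) I do not expect a serious obstacle here: all the substance is in the bookkeeping of part (a) — the identity $D^{j}(s^{j}) = j!$ together with the descending induction — and in the repeated appeal to the transcendence of $s$ over $A$, which is exactly what legitimizes the coefficient comparisons used in (b) and (c).
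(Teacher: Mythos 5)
Your proof is correct, but note that the paper itself offers no proof of this statement: Theorem \ref{st} is quoted as a known result (the Slice Theorem, \cite[Corollary 1.22]{ST}), so there is no internal argument to compare against. What you have written is essentially the standard textbook proof from the literature: the identity $D^{j}(s^{j})=j!$ together with the descending induction on nilpotency degree gives $B=A[s]$, the binomial cancellation $\sum_{i=0}^{j}(-1)^{i}\binom{j}{i}=0$ identifies $\pi_{s}$ with the retraction $s\mapsto 0$, and (c) follows since a retract of an affine $k$-algebra is affine. All steps check out, including the points where care is needed (transcendence of $s$ over $A$ via the algebraic closedness of $\operatorname{Ker} D$, and $A$-linearity of $D^{n}$ on products $(D^{n}b)\,s^{n}$ with $D^{n}b\in A$).
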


The following theorem of Daigle and Freudenburg characterizes locally nilpotent derivations of $R^{[2]}$, 
where $R$ is a UFD containing $\bQ$ \cite[Theorem 2.4]{DF}.
\begin{thm}\label{df}
Let $R$ be a UFD containing $\bQ$ with field of fractions $K$ and let $B=R[X,Y]=R^{[2]}$. 
For an $R$-derivation $D\neq0$ of $B$, the following are equivalent:
\begin{enumerate}
 \item[\rm (i)] $D$ is locally nilpotent.
 \item[\rm (ii)] $D=\alpha(\frac{\partial F}{\partial Y}\frac{\partial}{\partial X}-
 \frac{\partial F}{\partial X}\frac{\partial}{\partial Y})$, for some $F \in B$ which is a variable of $K[X,Y]$
  satisfying\\
  $gcd_B(\frac{\partial F}{\partial X},\frac{\partial F}{\partial Y})=1$, and for some 
 $\alpha \in R[F] \setminus \{0\}$.
 \end{enumerate}
Moreover, if the above conditions are satisfied, then $Ker$ $D=R[F]=R^{[1]}$.
\end{thm}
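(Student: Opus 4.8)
The plan is to prove the two implications in turn, the main new ingredient being a descent lemma identifying $K[F]\cap B$ with $R[F]$. For the implication (ii) $\Rightarrow$ (i), write $\Delta_F:=\frac{\partial F}{\partial Y}\frac{\partial}{\partial X}-\frac{\partial F}{\partial X}\frac{\partial}{\partial Y}$ for the Jacobian derivation of $F$, so that $D=\alpha\Delta_F$. Since $F$ is a variable of $K[X,Y]$, fix $G$ with $K[X,Y]=K[F,G]$; then $\Delta_F(F)=0$ and $\Delta_F(G)=-\operatorname{Jac}_{X,Y}(F,G)$, which is a unit of $K[X,Y]$, hence a nonzero element of $K$, because $(F,G)$ is a coordinate system. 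Thus $\Delta_F$ is a nonzero scalar multiple of $\partial/\partial G$ in the coordinates $(F,G)$, so it is locally nilpotent on $K[X,Y]$ with kernel $K[F]$; as $\alpha\in R[F]\subseteq K[F]=\operatorname{Ker}\Delta_F$ is nonzero and $\Delta_F$ maps $B$ into $B$, the derivation $D=\alpha\Delta_F$ is locally nilpotent on $B$ with $\operatorname{Ker}_B D=\operatorname{Ker}_B\Delta_F=K[F]\cap B$. The descent lemma below identifies this last ring with $R[F]=R^{[1]}$, which also settles the ``moreover'' assertion.

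For the implication (i) $\Rightarrow$ (ii), note that $D$ being an $R$-derivation forces $R\setminus\{0\}\subseteq\operatorname{Ker}D$, so by the well-known localization property of locally nilpotent derivations $D$ extends to a nonzero locally nilpotent derivation of $K[X,Y]=K^{[2]}$ whose kernel $C$ meets $B$ in $\operatorname{Ker}_B D$. By Rentschler's theorem (after scaling the coordinate into $B$), there is a coordinate system $(F_1,G)$ of $K[X,Y]$ with $F_1\in B$ and $D=q(F_1)\,\partial/\partial G$ for some $q\in K[F_1]\setminus\{0\}$; then $C=K[F_1]$ and, arguing as above with $\Delta_{F_1}$ in place of $\Delta_F$, $D=\alpha_1\Delta_{F_1}$ on $K[X,Y]$ for some $\alpha_1\in K[F_1]\setminus\{0\}$, where $\Delta_{F_1}$ is the Jacobian derivation of $F_1$. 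I would then replace $F_1$ by a suitably normalized generator of $C$: with $g:=\gcd_B(\frac{\partial F_1}{\partial X},\frac{\partial F_1}{\partial Y})$, any prime factor of $g$ involving $X$ or $Y$ would be a common factor of the two partials inside $K[X,Y]$, contradicting that $\operatorname{Jac}_{X,Y}(F_1,G)$ is a unit; hence $g\in R$, and since $\bQ\subseteq R$ this forces $g$ to divide every non-constant coefficient of $F_1$, so that $F:=g^{-1}(F_1-F_1(0,0))\in B$ still satisfies $K[F]=C$ and now has $\gcd_B(\frac{\partial F}{\partial X},\frac{\partial F}{\partial Y})=1$. As $\Delta_{F_1}=g\Delta_F$, we get $D=\alpha\Delta_F$ with $\alpha:=g\alpha_1\in K[F]\setminus\{0\}$, an identity of derivations of $K[X,Y]$ that in particular holds on $B$ because $D(B)\subseteq B$. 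Finally, writing $\alpha=u/v$ with $u,v\in B$ coprime in the UFD $B$, the relations $\alpha\frac{\partial F}{\partial Y}=D(X)\in B$ and $\alpha\frac{\partial F}{\partial X}=-D(Y)\in B$ force $v$ to divide $\gcd_B(\frac{\partial F}{\partial X},\frac{\partial F}{\partial Y})=1$, so $\alpha\in B$, whence $\alpha\in K[F]\cap B=R[F]$ by the descent lemma; and $\operatorname{Ker}_B D=C\cap B=K[F]\cap B=R[F]=R^{[1]}$ by the same lemma.

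It remains to prove the descent lemma: \emph{if $F\in B$ is a variable of $K[X,Y]$ with $\gcd_B(\frac{\partial F}{\partial X},\frac{\partial F}{\partial Y})=1$, then $K[F]\cap B=R[F]$.} Let $h=\sum_{i=0}^{n}c_iF^i\in B$ with $c_i\in K$; one must show $c_i\in R$ for all $i$. Fix a prime element $p$ of $R$ and pass to the discrete valuation ring $R_{pR}$. If $-e:=\min_i v_p(c_i)<0$, then $e\geq1$ and $p^{e}h=\sum_i(p^{e}c_i)F^i$ lies in $R_{pR}[X,Y]$; reducing modulo $p$, it becomes on the one hand $0$ (since $e\geq1$) and on the other hand $\sum_i\overline{p^{e}c_i}\,\bar F^{\,i}$ in the polynomial ring over the residue field $R_{pR}/pR_{pR}$. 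The latter is nonzero: some $\overline{p^{e}c_i}\neq0$, and $\bar F$ is non-constant because $\gcd_B(\frac{\partial F}{\partial X},\frac{\partial F}{\partial Y})=1$ prevents $p$ from dividing all non-constant coefficients of $F$, so $1,\bar F,\bar F^{\,2},\dots$ are linearly independent over the residue field. This contradiction shows $c_i\in R_{pR}$ for every $i$, and since a UFD is the intersection of its localizations at its height-one (equivalently, principal) primes, $c_i\in R$. The two genuinely delicate points are therefore this reduction-modulo-$p$ argument and the preliminary normalization of the coordinate $F$ that makes the gcd hypothesis available; everything else follows from Rentschler's theorem and standard properties of locally nilpotent derivations.
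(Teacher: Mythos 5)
This statement is Theorem~\ref{df}, which the paper quotes from Daigle--Freudenburg \cite[Theorem 2.4]{DF} without reproducing a proof, so there is no internal argument to compare against; judged on its own terms, your proof is correct. The two external inputs you invoke (Rentschler's theorem over the field $K$, and the fact that multiplying a locally nilpotent derivation by a kernel element, or localizing at the kernel, preserves local nilpotency) are exactly the standard ones, and the genuinely new content you supply is sound: the normalization $F:=g^{-1}(F_1-F_1(0,0))$ works because a prime factor of $g$ of positive degree would remain a non-unit common factor of the partials in $K[X,Y]$, contradicting that $\operatorname{Jac}_{X,Y}(F_1,G)\in K^{*}$, while $\bQ\subseteq R$ makes the integer multipliers in $\partial F_1/\partial X$, $\partial F_1/\partial Y$ units so that $g$ divides every non-constant coefficient of $F_1$; the coprime-fraction argument forcing $\alpha\in B$ is clean; and the descent lemma $K[F]\cap B=R[F]$ via reduction modulo each prime $p$ of $R$ is valid, the key points being that $R_{pR}$ is a DVR, that $R=\bigcap_p R_{pR}$ in any UFD, and that $\gcd_B(\frac{\partial F}{\partial X},\frac{\partial F}{\partial Y})=1$ guarantees $\overline{F}$ is non-constant, hence transcendental, over the residue field. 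For comparison, the original source reaches $\operatorname{Ker}D=R[F]$ by exploiting that the kernel is a factorially closed subring of $B$ (hence a UFD of transcendence degree one over $R$), in the spirit of Theorem~\ref{aeh}; your reduction-mod-$p$ descent is a more hands-on substitute that buys an explicit identification of $K[F]\cap B$ rather than an abstract generator, at the cost of a slightly longer verification.
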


With the same notation as above, the following lemma gives interesting results when $D$ satisfies some additional 
hypothesis \cite[Lemma 4.2]{W}. 
\begin{lem}\label{UFDWang}
Let $R$ be a UFD containing $\bQ$, $B=R[X,Y](=R^{[2]})$ and $D \in LND_{R}(B)$ such that $D$ is irreducible. Then the 
following hold:
\begin{enumerate}
 \item [\rm (i)] If $D^{2}X=0$, then $Ker$ $D=R[bY+f(X)]$, where $b \in R$ and $f(X) \in R[X]$. Moreover, $DX \in R$ and 
 $DY \in R[X]$.
 \item [\rm (ii)] If $D^{2}X=D^{2}Y=0$, then $D=b\frac{\partial }{\partial X}-a\frac{\partial }{\partial Y}$ for some 
 $a,b \in R$. Moreover, \\ $Ker$ $D=R[aX+bY]$.
 \item [\rm (iii)] If $R$ is a PID and $D^{2}X=D^{2}Y=0$, then $D$ has a slice.
\end{enumerate}

\end{lem}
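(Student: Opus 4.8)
The plan is to derive part (iii) directly from part (ii). First I would apply (ii) to write $D = b\frac{\partial}{\partial X} - a\frac{\partial}{\partial Y}$ for some $a, b \in R$. Since $D$ is an $R$-derivation of $B = R[X,Y]$, for every $f \in B$ we have $Df = b\,\frac{\partial f}{\partial X} - a\,\frac{\partial f}{\partial Y} \in (a,b)B$; and since $DX = b$ and $DY = -a$, the ideal $(DB)$ generated by the image of $D$ contains both $a$ and $b$. Hence $(DB) = (a,b)B$.

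Next I would use the hypothesis that $R$ is a PID. Write $aR + bR = dR$ for some $d \in R$; then $(a,b)B = dB$, so the image ideal $(DB)$ equals the principal ideal $dB$. Since $D$ is irreducible, $d$ cannot be a non-unit of $B$; but the units of $B = R[X,Y]$ are precisely the units of $R$ (as $B$ is a domain), so $d \in R^{*}$, i.e. $aR + bR = R$.

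Finally, by B\'ezout choose $u, v \in R$ with $au + bv = 1$ and put $s := vX - uY \in B$. Then $Ds = bv - a(-u) = au + bv = 1$, so $s$ is a slice of $D$, which proves (iii). I do not anticipate a real obstacle here: the only substantive point is the reduction, via the PID hypothesis, of the possibly non-principal unit ideal $(a,b)B$ to a principal one, after which irreducibility forces $D$ to be fixed-point free and the constant-coefficient form of $D$ makes a linear slice transparent. This is exactly where ``PID'' cannot be weakened to ``UFD'': over a general UFD one may have $\gcd_R(a,b) = 1$, so that $D$ is irreducible, while $aR + bR \subsetneq R$, in which case $Ds = 1$ is unsolvable. (One could also phrase the conclusion through the Slice Theorem \ref{st}, applied to the $k$-domain $B$: the coordinate change $(X,Y) \mapsto (aX+bY,\, s)$ conjugates $D$ to $\frac{\partial}{\partial s}$, consistent with $Ker\ D = R[aX+bY]$ from (ii).)
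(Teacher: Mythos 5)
Your argument for part (iii) is correct: from the form $D=b\frac{\partial}{\partial X}-a\frac{\partial}{\partial Y}$ one gets $(DB)=(a,b)B$, the PID hypothesis makes this $dB$ with $d=\gcd_R(a,b)$, irreducibility forces $d\in B^{*}=R^{*}$, and B\'ezout then produces the linear slice $s=vX-uY$. This is essentially the only natural route and is the standard one (the paper itself does not prove this lemma at all --- it is imported verbatim from Wang \cite[Lemma 4.2]{W} --- so there is no in-paper proof to compare against).

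The genuine gap is that you have only proved (iii) \emph{conditionally on} (ii), and you prove neither (i) nor (ii). Those are the substantive parts of the lemma, and they do not follow from generalities: one has to invoke Theorem \ref{df} to write $D=\alpha\bigl(\frac{\partial F}{\partial Y}\frac{\partial}{\partial X}-\frac{\partial F}{\partial X}\frac{\partial}{\partial Y}\bigr)$ with $Ker\ D=R[F]$, use irreducibility to force $\alpha\in R^{*}$, and then exploit $D^{2}X=0$, i.e.\ $\frac{\partial F}{\partial Y}\in R[F]$: comparing $Y$-degrees of $\frac{\partial F}{\partial Y}$ and of a polynomial in $F$ forces $\deg_{Y}F=1$ and $\frac{\partial F}{\partial Y}\in R$, whence $F=bY+f(X)$, $DX=b\in R$ and $DY=-f'(X)\in R[X]$, which is (i). Part (ii) then needs the further observation that $D^{2}Y=0$ puts $f'(X)$ in $R[F]\cap R[X]=R$, so $f$ is linear and $F$ may be taken to be $aX+bY$. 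Without this degree argument the specific linear form of $D$ that your part (iii) relies on is unjustified, so as a proof of the full statement the proposal is incomplete.
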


Over a Noetherian domain containing $\bQ$, a necessary and sufficient condition for the kernel of a nonzero irreducible 
$D \in LND_{R}(R[X,Y])$ to be a polynomial ring 
is given by \cite[Theorem 4.7]{BD97}.
\begin{thm}\label{bd97}
Let $R$ be a Noetherian domain containing $\bQ$ and let $D$ be a non-zero irreducible locally nilpotent derivation of the 
polynomial ring $R[X,Y]$. Then the kernel $A$ of $D$ is a polynomial ring in one variable over $R$ if and only if $DX$ and $DY$ 
either form a regular $R[X,Y]$-sequence or are comaximal in $R[X,Y]$. Moreover if $DX$ and $DY$ are comaximal in $R[X,Y]$, 
then $R[X,Y]$ is a polynomial ring in one variable over $A$.
\end{thm}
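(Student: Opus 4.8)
The plan is to reduce to the case where $R$ is a field, to describe the kernel $A$ as an intermediate $R$-subalgebra of a one-variable polynomial ring over $K:=\mathrm{Frac}(R)$, and then to extract the condition ``$A=R^{[1]}$'' from a fractional ideal of $R$ attached to $DX$ and $DY$.

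First I would extend $D$ to a nonzero locally nilpotent $K$-derivation $\widetilde D$ of $K[X,Y]=S^{-1}B$, where $B:=R[X,Y]$ and $S:=R\setminus\{0\}$; then $Ker\,\widetilde D=S^{-1}A$ and $A=B\cap Ker\,\widetilde D$. Since $K[X,Y]=K^{[2]}$ and $\mathrm{char}\,K=0$, Rentschler's theorem gives $Ker\,\widetilde D=K[F]$ for some $F$ which is a coordinate of $K[X,Y]$; after multiplying $F$ by a nonzero element of $R$ we may assume $F\in B$, so that $A=B\cap K[F]$ and $F\in A$. Writing $K[X,Y]=K[F,G]$, the derivation $\widetilde D$ is $K[F]$-linear with $\widetilde D F=0$, so $\widetilde D=\beta\,\partial/\partial G$ with $\beta:=\widetilde D G\in K[F]\setminus\{0\}$, and in $K[X,Y]$
\[
DX=\frac{\partial X}{\partial G}\,\beta,\qquad DY=\frac{\partial Y}{\partial G}\,\beta,\qquad
\gcd\Big(\frac{\partial X}{\partial G},\,\frac{\partial Y}{\partial G}\Big)=1 .
\]
I would also record, using a local slice $Dr=a\in A\setminus\{0\}$ and Theorem \ref{st} applied over $A_a$, that $B_a=A_a[r/a]$ with $A_a=(Ker\,D)_a$, so that $B$ is a one-variable polynomial ring over $A$ both generically and after inverting $a$.

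Next I would study the $R$-algebra $A=B\cap K[F]$ in the spirit of Bhatwadekar and Dutta's $R$-flatness analysis \cite{BD97}. Filtering $K[F]$ by $F$-degree, each graded piece $\mathfrak b_n:=(A\cap K[F]_{\le n})/(A\cap K[F]_{\le n-1})$ is a nonzero $R$-submodule of $KF^n\cong K$, i.e.\ a fractional ideal of $R$, with $\mathfrak b_0=R$ (because $K\cap B=R$) and $\mathfrak b_m\mathfrak b_n\subseteq\mathfrak b_{m+n}$; one also checks that $A$ is finitely generated and $R$-flat with $A\otimes_RK=K[F]$. Since $F\in A$, if $A$ is generated over $R$ by one element then that element has $F$-degree $1$, and an elementary degree computation then yields: $A\cong R^{[1]}$ if and only if $\mathfrak b_1$ is a principal fractional ideal and $\mathfrak b_n=\mathfrak b_1^{n}$ for all $n$. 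The crux is to convert these conditions into the stated dichotomy for $(DX,DY)$. For this I would combine the equalities $DX=(\partial X/\partial G)\beta$, $DY=(\partial Y/\partial G)\beta$ with the coprimality of $\partial X/\partial G$ and $\partial Y/\partial G$ in $K[X,Y]$ and with the irreducibility of $D$ over $R$ — which forbids $(DX,DY)B$ from lying in a proper principal ideal of $B$ — to show that $\mathfrak b_1$ is principal with $\mathfrak b_n=\mathfrak b_1^{n}$ exactly when $(DX,DY)B=B$ or $DX,DY$ form a regular $B$-sequence. The complementary case is $(DX,DY)B$ a proper ideal which is not a regular sequence, hence of grade $1$ and — by irreducibility — contained in a non-principal prime $\mathfrak q$ of $B$ with $\hgt\,\mathfrak q=1$; this is exactly the configuration producing a non-principal $\mathfrak b_1$. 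For the ``moreover'' assertion, in the comaximal case $(DX,DY)B=B$ the derivation $D$ is fixed-point free, and I would invoke the fact — a substantial input over a Noetherian $\bQ$-algebra — that a fixed-point free $D\in LND_R(R^{[2]})$ admits a slice $s$; then Theorem \ref{st} gives $B=A[s]=A^{[1]}$ and exhibits $A=\pi_s(B)$ as a retract of $B$, whence $A\cong R^{[1]}$ (equivalently, $s$ is a coordinate of $B$) by the structure theory of retracts of $R^{[2]}$, using that $R$ is Noetherian and $A\otimes_RK=K^{[1]}$.

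The hard part will be the identification of the ideals $\mathfrak b_n$ with the $(DX,DY)$-condition: passing from the coprime factorisation $DX=(\partial X/\partial G)\beta$, $DY=(\partial Y/\partial G)\beta$ over $K[X,Y]$ to the $R$-module structure of $B\cap K[F]$ requires controlling how the primary components of $\beta$ meet $B$, and this is precisely where a possible failure of $R$ to be factorial enters. A secondary obstacle is the fixed-point free $\Rightarrow$ slice step together with the cancellation-type conclusion $A\cong R^{[1]}$ in the comaximal case, for which the Noetherian hypothesis on $R$ and the retract structure are needed.
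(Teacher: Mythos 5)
First, a point of reference: the paper does not prove this statement at all --- Theorem \ref{bd97} is quoted verbatim from Bhatwadekar--Dutta \cite[Theorem 4.7]{BD97} as an external input, so there is no internal proof to compare against. Measured against what an actual proof requires, your outline does follow the general strategy of the cited source (realise $A$ as $B\cap K[F]$ for a coordinate $F$ of $K[X,Y]$ lying in $B$, filter by $F$-degree, and read off $A\cong R^{[1]}$ from the resulting chain of fractional ideals $\mathfrak b_n$ with $\mathfrak b_m\mathfrak b_n\subseteq\mathfrak b_{m+n}$). The reduction ``$A\cong R^{[1]}$ iff $\mathfrak b_1$ is principal and $\mathfrak b_n=\mathfrak b_1^n$ for all $n$'' is correct and you argue it adequately, and the ``moreover'' clause is recoverable from results the paper already quotes: comaximality makes $D$ fixed-point free, Theorem \ref{fpf} gives a slice $s$, Theorem \ref{st} gives $B=A[s]=A^{[1]}$, and then Hamann's Theorem \ref{ham} gives $A=R^{[1]}$ directly --- you do not need any ``structure theory of retracts of $R^{[2]}$,'' which over a general Noetherian domain is a much less standard tool.

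The genuine gap is that the step you yourself label ``the hard part'' --- the equivalence between ``$\mathfrak b_1$ principal and $\mathfrak b_n=\mathfrak b_1^n$'' and ``$(DX,DY)B=B$ or $DX,DY$ is a regular $B$-sequence'' --- is precisely the entire content of the theorem, and you give no argument for either implication. In particular, for the forward direction you must show that grade$\,(DX,DY)=2$ forces every $\mathfrak b_n$ to be principal and multiplicative, which is a global statement about how the coefficients of $F$ (and of the cofactor $\beta\in K[F]$) sit inside $R$; nothing in the coprime factorisation $DX=(\partial X/\partial G)\beta$, $DY=(\partial Y/\partial G)\beta$ over $K[X,Y]$ by itself controls this, and this is exactly where non-factoriality of $R$ bites (compare the non-principal prime $(a,1+b)$ phenomenon in Example \ref{dd}). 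Two further assertions are also waved through that are themselves substantial theorems of \cite{BD97}: that $A$ is a finitely generated $R$-algebra and that $A$ is $R$-flat. Neither follows from general nonsense about kernels of locally nilpotent derivations over a Noetherian base (finite generation of such kernels is exactly the delicate issue the Introduction of this paper discusses in the four-variable case), so ``one also checks'' is not an acceptable substitute. A minor further caution: in the non-regular-sequence case you assert that $(DX,DY)$ lies in a height-one associated prime of $(DX)$; since $R[X,Y]$ need not be Cohen--Macaulay for arbitrary Noetherian $R$, you only get a grade-one (possibly embedded) prime, and the argument should be phrased in terms of grade rather than height.
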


An important result on fixed-point free locally nilpotent derivations is the following \cite[Theorem 4.16]{ST}.
\begin{thm}\label{fpf}
Let $R$ be any $\bQ$-algebra, and let $B=R[X,Y]=R^{[2]}$. Given $D \in LND_{R}(R[X,Y])$, the following 
conditions are equivalent:
\begin{enumerate}
 \item [\rm (1)] $D$ is fixed-point free, i.e., $(DB)=B$, where $(DB)$ is the $B$-ideal generated by $DB$.
 \item [\rm (2)] There exists $s \in B$ with $Ds=1$.
\end{enumerate}
In addition, when these conditions hold, $Ker$ $D=R^{[1]}$.
\end{thm}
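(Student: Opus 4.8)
The implication $(2)\Rightarrow(1)$ is immediate: if $Ds=1$ for some $s\in B$, then $1=Ds\in(DB)$, so $(DB)=B$. I therefore concentrate on $(1)\Rightarrow(2)$ and the final assertion. The starting observation is that, $D$ being an $R$-derivation of $R[X,Y]$, one has $D=(DX)\frac{\partial}{\partial X}+(DY)\frac{\partial}{\partial Y}$, whence the ideal $(DB)$ equals $(DX,DY)$; thus (1) says precisely that $DX$ and $DY$ are comaximal in $B$. In particular $D\neq0$, and $D$ is irreducible, since $DB\subseteq bB$ for a non-unit $b$ would force $(DB)\subseteq bB\neq B$.

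The plan is to descend to a Noetherian base and apply Theorem \ref{bd97}. Write $R$ as a directed union of finitely generated $\bQ$-subalgebras, and choose one, $R_0$, containing the finitely many coefficients that occur in $DX$, in $DY$, and in a fixed identity $1=P\,DX+Q\,DY$ with $P,Q\in B$. Then $D$ restricts to a nonzero $R_0$-derivation $D_0$ of $B_0:=R_0[X,Y]$; it is locally nilpotent because $D^NX=D^NY=0$ for some $N$ (as $D$ is locally nilpotent on $B$), and these identities, involving only $DX$ and $DY$, already hold in $B_0$; and it is fixed-point free because $1=P\,D_0X+Q\,D_0Y$ holds in $B_0$. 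Since $R_0$ is Noetherian and $D_0$ is a nonzero irreducible locally nilpotent derivation with $D_0X$ and $D_0Y$ comaximal, Theorem \ref{bd97} yields $A_0:=Ker\,D_0=R_0[F]=R_0^{[1]}$ for some $F\in B_0$ and, moreover, $B_0=A_0^{[1]}=A_0[G]=R_0[F,G]$ with $(F,G)$ a coordinate system of $B_0$ over $R_0$. (If $R$ is not a domain one must first pass to $R_0/\mathfrak{q}$ for a minimal prime $\mathfrak{q}$ before invoking Theorem \ref{bd97}, and then descend the conclusion back to $R_0$; since Theorem \ref{bd97} is stated only for domains, this is the one step at which I expect genuine extra work to be required.)

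Since $D_0F=0$ and $D_0$ is $R_0$-linear, $D_0$ is an $A_0$-derivation of $A_0[G]$, and a comparison of $D_0$-degrees forces $D_0G\in A_0$: writing $D_0G=\sum_{j\geqslant0}b_jG^j$ with $b_j\in A_0$, if $\deg_{D_0}G=d\geqslant2$ then $D_0G\notin A_0$, so the right-hand side has $D_0$-degree a positive multiple of $d$, whereas the left-hand side has $D_0$-degree $d-1<d$. Consequently $D_0B_0=(D_0F,D_0G)B_0=(D_0G)B_0$, so fixed-point-freeness makes $u:=D_0G$ a unit of $B_0$ lying in $A_0$, hence a unit of $A_0$; therefore $s:=u^{-1}G\in B_0\subseteq B$ satisfies $Ds=D_0s=u^{-1}D_0G=1$, which proves (2). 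For the last assertion, base change along $R_0\hookrightarrow R$ carries the coordinate system $(F,G)$ of $B_0$ over $R_0$ to a coordinate system of $B=R[X,Y]$ over $R$, and since $u\in A_0^{*}$ the pair $(F,s)$ is again such a coordinate system; as $D$ is the $R$-derivation of $R[F][s]$ with $DF=0$ and $Ds=1$, it equals $\frac{\partial}{\partial s}$, so $Ker\,D=R[F]=R^{[1]}$.
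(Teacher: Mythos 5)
This statement is quoted in the paper from \cite[Theorem 4.16]{ST} and is not proved there, so there is no in-paper argument to compare against; I can only assess your proof on its own terms. The core of your argument --- reduce to a finitely generated $\bQ$-subalgebra $R_0$ carrying the coefficients of $DX$, $DY$ and of a comaximality relation, apply Theorem \ref{bd97} to get $B_0=R_0[F,G]$ with $A_0=R_0[F]$, use the $D_0$-degree computation to force $D_0G\in A_0$, conclude $D_0G\in B_0^{*}\cap A_0=A_0^{*}$ and rescale $G$ to a slice --- is correct and complete \emph{when $R$ is an integral domain}, and that is in fact the only case the paper ever uses (it applies the theorem over $R[f]$ with $R$ a Dedekind domain).

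However, the theorem as stated is for an arbitrary $\bQ$-algebra $R$, and there your proof has a genuine gap, which you yourself flag: Theorem \ref{bd97} requires a Noetherian \emph{domain}, and your proposed remedy --- pass to $R_0/\mathfrak{q}$ for a minimal prime $\mathfrak{q}$ and ``descend the conclusion back'' --- does not work as described. The minimal primes of $R_0$ need not be comaximal, so the slices $s_{\mathfrak{q}}$ produced over the various quotients $R_0/\mathfrak{q}$ cannot in general be patched into a single $s\in B_0$ with $D_0s=1$ (already for $R_0=\bQ[a,b]/(ab)$ one must reconcile the two slices along the common fibre over $(a,b)$), and the non-reduced case is not addressed at all: lifting a slice through the nilradical requires a separate argument since one only gets $Ds=1+n$ with $n$ in the extension of the nilideal. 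This patching-and-lifting step is precisely the substantive content of the general theorem in the literature, not a routine afterthought. To repair the write-up you should either (a) restrict the statement you prove to $R$ a domain, which suffices for every application in this paper, or (b) supply the reduction to the reduced case and an induction on the number of minimal components, which is a real additional argument and not a one-line descent.
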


For a ring containing $\bQ$, the following cancellation theorem was proved by Hamann \cite[Theorem 2.8]{HAM}.
\begin{thm}\label{ham}
Let $R$ be a ring containing $\bQ$ and $A$ be an $R$-algebra such that $A^{[1]}=R^{[2]}$. Then $A=R^{[1]}$.
\end{thm}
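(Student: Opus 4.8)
The plan is to realize $A$ as the kernel of a locally nilpotent $R$-derivation of $R^{[2]}$ admitting a slice, and then to quote Theorem \ref{fpf}. Write $A^{[1]} = A[t]$ with $t$ an indeterminate over $A$, and fix an $R$-algebra isomorphism $\varphi \colon A[t] \xrightarrow{\ \sim\ } B$, where $B = R[X,Y] = R^{[2]}$. On $A[t]$ consider the derivation $\delta = \partial/\partial t$. Since $R \subseteq A$ and $A = \ker \delta$, the derivation $\delta$ is $R$-linear; it is clearly locally nilpotent, and it has the slice $t$ (as $\delta t = 1$). Transporting along $\varphi$, set $D = \varphi \circ \delta \circ \varphi^{-1}$. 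Then $D \in LND_{R}(B)$, it admits the slice $s = \varphi(t)$, and $\ker D = \varphi(\ker \delta) = \varphi(A)$, so $\ker D \cong A$ as $R$-algebras.

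Now apply Theorem \ref{fpf} to this $D$: since $R$ is a $\bQ$-algebra and $D \in LND_{R}(R[X,Y])$ satisfies condition (2) of that theorem (it has the slice $s$), the concluding assertion yields $\ker D = R^{[1]}$. Combining with $\ker D \cong A$, we obtain $A = R^{[1]}$, as desired. Alternatively, one may first invoke the Slice Theorem (Theorem \ref{st}) to get $B = (\ker D)[s]$ with $D = \partial/\partial s$, thereby exhibiting $R^{[2]}$ explicitly as $(\ker D)^{[1]}$; but Theorem \ref{fpf} already delivers exactly the needed conclusion.

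The verifications required along the way are all routine: transporting a derivation along an $R$-algebra isomorphism preserves $R$-linearity, local nilpotence, the set of slices, and the kernel up to that isomorphism. Consequently there is no serious obstacle internal to this argument; its entire weight rests on Theorem \ref{fpf}, i.e.\ on the fact that a slice-admitting (equivalently, fixed-point free) locally nilpotent $R$-derivation of $R^{[2]}$ has kernel $R^{[1]}$. Were that input unavailable, one would have to reprove the relevant Rentschler-type structure theorem for $LND_{R}(R^{[2]})$, which is where the genuine difficulty of the cancellation phenomenon resides; in particular the hypothesis $\bQ \subseteq R$ enters precisely through Theorem \ref{fpf} and cannot be dropped.
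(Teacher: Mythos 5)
The paper offers no proof of this statement at all: it is quoted as a known result of Hamann (\cite[Theorem 2.8]{HAM}), so there is no internal argument to compare yours against. Your construction is correctly executed as far as it goes: writing $A^{[1]}=A[t]$ and transporting $\partial/\partial t$ along an $R$-algebra isomorphism $\varphi\colon A[t]\to R[X,Y]$ does yield an element of $LND_R(R[X,Y])$ with slice $\varphi(t)$ and kernel $\varphi(A)$, and the concluding clause of Theorem \ref{fpf} then gives $\ker D=R^{[1]}$, hence $A=R^{[1]}$.

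The genuine problem is circularity, which you acknowledge obliquely ("its entire weight rests on Theorem \ref{fpf}") but do not confront. The clause of Theorem \ref{fpf} you are using --- that a slice-admitting $D\in LND_R(R^{[2]})$ has kernel $R^{[1]}$ for an arbitrary $\bQ$-algebra $R$ --- is standardly obtained by exactly the route you mention as an "alternative": the Slice Theorem gives $B=(\ker D)[s]$, i.e.\ $R^{[2]}=(\ker D)^{[1]}$, and one then concludes $\ker D=R^{[1]}$ by invoking Hamann's cancellation theorem, which is the very statement to be proved. (The substantive, independently proved content of Theorem \ref{fpf} is the equivalence of fixed-point freeness with the existence of a slice; the kernel assertion is an add-on that rides on Hamann.) So what your argument actually establishes is that Theorem \ref{ham} is \emph{equivalent} to the final clause of Theorem \ref{fpf} --- a correct and even mildly illuminating observation --- but as a proof of Hamann's theorem it begs the question unless you supply an independent proof of that clause. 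The real content of the theorem, namely the cancellation phenomenon for $\bQ$-algebras, is nowhere engaged. If your intent is only the bookkeeping claim that Theorem \ref{ham} is formally a consequence of results already quoted in the paper, the derivation is fine; as a self-supporting proof it is not.
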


The following is a well-known result of Abhyankar, Eakin and Heinzer \cite[Proposition 4.8]{AEH}.
\begin{thm}\label{aeh}
Let $C$ be a UFD and let $X_1,\dots,X_n$ be indeterminates over $C$. Suppose that $A$ is an integral domain of 
transcendence degree one over $C$ and that $C \subseteq A \subseteq C[X_1,\dots,X_n]$. If $A$ is a 
factorially closed subring of $C[X_1,\dots,X_n]$, then $A=C^{[1]}$.
\end{thm}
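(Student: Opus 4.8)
The plan is to reduce the statement to the case where the coefficient ring is a field, settle that case by curve theory, and then descend. Put $K=\operatorname{Frac}(C)$ and $S=C\setminus\{0\}$, a multiplicatively closed subset of $A$. By the behaviour of factorially closed subrings under localization recalled above, $S^{-1}A$ is factorially closed in $S^{-1}C[X_1,\dots,X_n]=K[X_1,\dots,X_n]$, it contains $K$, and $\td_{K}S^{-1}A=\td_{C}A=1$. So the problem splits into proving $S^{-1}A=K^{[1]}$ (the field case), and then descending from $S^{-1}A=K^{[1]}$ to $A=C^{[1]}$.

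For the field case, suppose $L$ is a field with $L\subseteq B\subseteq L[X_1,\dots,X_n]=:P$, $B$ factorially closed in $P$, and $\td_{L}B=1$. Then $B$ is a UFD, $B^{*}=L^{*}$ (as $P^{*}=L^{*}$), $L$ is algebraically closed in $B$, and every prime element of $B$ is prime in $P$. The field $\operatorname{Frac}(B)$ is a subfield of $L(X_1,\dots,X_n)$, hence finitely generated over $L$; being of transcendence degree one with $L$ algebraically closed in it, it is the function field of a smooth, geometrically integral projective curve $\mathcal C$ over $L$. As a UFD, $B$ is a Krull domain, so $B=\bigcap_{\mathfrak p}B_{\mathfrak p}$ over its height-one primes; each $B_{\mathfrak p}$ is a DVR of $\operatorname{Frac}(B)$ trivial on $L$, hence equals $\mathcal O_{\mathcal C,v}$ for a closed point $v$, so $B=\bigcap_{v\in T}\mathcal O_{\mathcal C,v}$ for some set $T$ of closed points of $\mathcal C$. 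The identity $B^{*}=L^{*}$ then forbids any nonconstant rational function on $\mathcal C$ whose divisor is supported on $\mathcal C\setminus T$, while $\operatorname{Cl}(B)=0$ forces the class of each point of $\mathcal C\setminus T$ to be trivial modulo $T$; a short Picard-group and Riemann--Roch computation forces $\mathcal C\cong\mathbb P^{1}_{L}$ and $\mathcal C\setminus T$ to be a single $L$-rational point, so $B=\mathcal O(\mathbb A^{1}_{L})=L[g]=L^{[1]}$. I expect this to be the main obstacle: a transcendence-degree-one subfield of $L(X_1,\dots,X_n)$ need not be rational on its own (function fields of elliptic curves embed into $L(X,Y)$), so the UFD and unit-group hypotheses must be used essentially to kill positive genus and the removal of more than one point.

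For the descent we have $S^{-1}A=K[g]$ with $g$ transcendental over $K$. After multiplying $g$ by an element of $S$ we may assume $g\in A$, and replacing $g$ by one of its prime divisors in the UFD $A$ that lies outside $C$, we may assume $g$ is prime in $A$ --- hence prime in $C[X_1,\dots,X_n]$ and not associated to any prime of $C$ --- with $K[g]$ still equal to $S^{-1}A$. Call $u\in A$ \emph{admissible} if $C[u]\subseteq A\subseteq K[u]$ (any admissible $u$ is transcendental over $K$, using $A\cap K=C$). Suppose $u$ is admissible and $A\neq C[u]$, and take $a\in A\setminus C[u]$, written $a=p(u)/s$ with $p\in C[T]$, $s\in S$ a nonunit, and no prime of $C$ dividing both $s$ and the content of $p$. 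Choosing a prime $\pi\mid s$ and reducing $sa=p(u)$ modulo $\pi$ gives $\bar p(\bar u)=0$ in the domain $(C/\pi)[X_1,\dots,X_n]$ with $\bar p\neq 0$; since a nonconstant polynomial over a domain is transcendental over it, $\bar u$ is constant, i.e.\ $u=c+\pi h$ with $c\in C$ and $h\in C[X_1,\dots,X_n]$. Then $\pi h=u-c\in C[u]\subseteq A$, so factorial closedness of $A$ forces $h\in A$, and $u':=h$ is admissible with $C[u]\subsetneq C[u']$. Iterating would produce $C[g]=C[u_0]\subsetneq C[u_1]\subsetneq\cdots$ inside $C[X_1,\dots,X_n]$ with $u_0=(\pi_0\cdots\pi_{i-1})\,u_i+\gamma_i$ for primes $\pi_j$ of $C$ and $\gamma_i\in C$; then every nonconstant coefficient of $u_0$ would be divisible by $\pi_0\cdots\pi_{i-1}$ for all $i$, which is impossible since $u_0\notin C$ and a nonzero element of the UFD $C$ has only finitely many prime factors counted with multiplicity. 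Hence the process terminates and $A=C[u_N]=C^{[1]}$ for some $N$.
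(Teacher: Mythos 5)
The paper does not prove this statement; it is quoted from Abhyankar--Eakin--Heinzer, so your argument has to stand on its own. Two of your three steps do: the reduction to the field case by localizing at $S=C\setminus\{0\}$ is fine, and the descent from $S^{-1}A=K[g]$ to $A=C^{[1]}$ is complete and correct --- the extraction of the unique prime factor of $g$ of degree one in $K[g]$, the induction $u_i=c_i+\pi_i u_{i+1}$ on admissible elements via reduction modulo a prime dividing the denominator, and the termination because a fixed nonzero nonconstant coefficient of $u_0$ can be divisible by $\pi_0\cdots\pi_{i-1}$ for only finitely many $i$. This descent is essentially the AEH mechanism.

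The genuine gap is in the field case, and it sits exactly where you flagged the ``main obstacle,'' but for the opposite reason to the one you give. The parenthetical claim that function fields of elliptic curves embed into $L(X,Y)$ is false: such an embedding would make $E$ unirational, and restricting the resulting dominant rational map $\A^2\dashrightarrow E$ to a suitable line gives a dominant map $\mathbb{P}^1\dashrightarrow E$, impossible by L\"uroth (or Riemann--Hurwitz). So you have the logic reversed: it is the inclusion $\operatorname{Frac}(B)\subseteq L(X_1,\dots,X_n)$ --- not the UFD and unit-group hypotheses --- that kills positive genus, via the transcendence-degree-one L\"uroth theorem (Gordan, Igusa): a subfield of $L(X_1,\dots,X_n)$ of transcendence degree one over $L$ in which $L$ is algebraically closed equals $L(u)$. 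Your proposed ``short Picard-group and Riemann--Roch computation'' from $\operatorname{Cl}(B)=0$ and $B^{*}=L^{*}$ alone cannot force $\mathcal{C}\cong\mathbb{P}^{1}$: take $E/\bQ$ elliptic with $E(\bQ)\cong\bZ$ generated by $P$ and origin $O$; then $\operatorname{Pic}(E)$ is free on $[O],[P]$, so $B:=\mathcal{O}(E\setminus\{O,P\})$ is a UFD with $B^{*}=\bQ^{*}$, of transcendence degree one over $\bQ$ with $\bQ$ algebraically closed in it, and equal to the intersection of its localizations at height-one primes --- every input your computation invokes --- yet $B\neq\bQ^{[1]}$. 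The repair is to put L\"uroth first to get $\mathcal{C}\cong\mathbb{P}^{1}_{L}$; after that your Picard argument does close: with $Z=\mathcal{C}\setminus T$, the map $\operatorname{Div}_{Z}\to\operatorname{Pic}(\mathbb{P}^{1})=\bZ$ is injective because $B^{*}=L^{*}$ and surjective because $\operatorname{Cl}(B)=0$, forcing $Z$ to be a single degree-one point and $B=L^{[1]}$, with no separate finiteness argument for $Z$ needed.
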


The following local-global theorem was proved by Bass, Connell and Wright \cite{BCW} and 
independently by Suslin \cite{SUS}.
\begin{thm}\label{bcw}
Let $R$ be a ring and $A$ a finitely presented $R$-algebra. Suppose that for all maximal 
ideals $\m$ of $R$, the $R_{\m}$-algebra $A_{\m}$ is isomorphic to the symmetric algebra of some $R_{\m}$-module. 
Then $A \cong Sym_R{(L)}$ for some finitely presented $R$-module $L$. 
\end{thm}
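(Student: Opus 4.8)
The plan is to treat this as a Zariski local-to-global (patching) statement, in the spirit of the Quillen--Suslin local-global principle for extended modules: one reduces to the case of Noetherian $R$, identifies the module $L$ intrinsically, and then glues the given local symmetric-algebra structures. First I would carry out the standard Noetherian reduction: writing $A=R[x_1,\dots,x_n]/(f_1,\dots,f_s)$, one descends $A$ to a finitely presented algebra $A_0$ over a finitely generated $\bZ$-subalgebra $R_0\subseteq R$ with $A\cong A_0\otimes_{R_0}R$, and since an isomorphism of finitely presented algebras --- and the finitely presented modules involved --- is detected by finitely many elements, one may enlarge $R_0$ so that the hypothesis already holds for $A_0$ over $R_0$; thus one reduces to the case $R$ Noetherian. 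Next I would pin down what $L$ must be: for a maximal ideal $\m$, an isomorphism $A_\m\cong Sym_{R_\m}(M_\m)$ transports the canonical augmentation of a symmetric algebra to an $R_\m$-algebra retraction $\epsilon_\m\colon A_\m\to R_\m$ and identifies $M_\m$ with the augmentation module $\ker(\epsilon_\m)/\ker(\epsilon_\m)^2\cong\Omega_{A_\m/R_\m}\otimes_{A_\m}R_\m$, which is finitely presented over $R_\m$ because $A$ is finitely presented over $R$. So $L$ should be a finitely presented $R$-module localizing to $M_\m$ at every $\m$, namely $L=\Omega_{A/R}\otimes_A R$ for a suitable global retraction $\epsilon\colon A\to R$.

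The heart of the argument is the patching. Since $A$ is finitely presented, spreading out the local isomorphisms produces a Zariski-open cover $\Spec R=\bigcup_i\Spec R[1/g_i]$ together with isomorphisms $\theta_i\colon A[1/g_i]\xrightarrow{\sim}Sym_{R[1/g_i]}(N_i)$, each $N_i$ finitely presented. On each overlap the structures $\theta_i$ and $\theta_j$ differ by an algebra automorphism of a symmetric algebra, and, by a standard device, the obstruction to modifying the $\theta_i$ so that they agree on overlaps lies in a succession of groups $H^1(\Spec R,\mathcal F)$ with $\mathcal F$ a coherent sheaf assembled from the $N_i$ and their duals. Since $\Spec R$ is affine these cohomology groups vanish, so the $\theta_i$ can be reconciled; equivalently, one obtains a global $R$-algebra retraction $\epsilon\colon A\to R$ together with a global standard grading on $A$. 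Then $L:=A_1$, the degree-one part of $A$ (which equals $\ker(\epsilon)/\ker(\epsilon)^2$), is a finitely presented $R$-module, the inclusion $A_1\hookrightarrow A$ induces an $R$-algebra homomorphism $Sym_R(L)\to A$, and this homomorphism is an isomorphism because it becomes one after localizing at every maximal ideal --- where, by construction, it is the isomorphism induced by $\theta_i$. By the local criterion this completes the proof.

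The step I expect to be the main obstacle is exactly this patching. It is genuinely delicate because $A$ need not be flat over $R$ --- the symmetric algebra of a non-projective module is never $R$-flat --- so one cannot simply descend $\Omega_{A/R}$, or the candidate module $L$, by faithfully flat descent; one has instead to control the gluing obstruction directly, and it is the vanishing of $H^1$ of the relevant coherent sheaves on the affine base $\Spec R$ that makes this work, exactly as in Quillen's patching argument for extended modules.
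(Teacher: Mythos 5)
First, a point of orientation: the paper does not prove this statement at all --- it is quoted as the Bass--Connell--Wright/Suslin theorem (\cite{BCW}, \cite{SUS}) and used as a black box --- so your outline can only be measured against the proofs in the literature. Your skeleton matches the strategy of \cite{BCW}: use finite presentation and quasi-compactness to get a finite cover $\Spec R=\bigcup_i\Spec R[1/g_i]$ with $A[1/g_i]\cong Sym_{R[1/g_i]}(N_i)$, identify the candidate module as the conormal module of an augmentation, and globalize by killing obstructions in $H^1$ of quasi-coherent sheaves on an affine base. Your identification $L=\Omega_{A/R}\otimes_AR$ is correct (for any $M$ one has $\Omega_{Sym_R(M)/R}\cong Sym_R(M)\otimes_RM$, so the conormal module at \emph{every} $R$-point of $Sym_R(M)$ is canonically $M$), and your observation that faithfully flat descent is unavailable because $Sym$ of a non-projective finitely presented module is not flat correctly explains why the theorem is not formal.

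The gap is that the one step carrying all the difficulty --- ``by a standard device, the obstruction \dots lies in a succession of groups $H^1(\Spec R,\mathcal F)$'' --- is named but not performed, and it is not automatic. Concretely: the transition maps $\theta_i\theta_j^{-1}$ are $R$-algebra automorphisms of a symmetric algebra, and these form a genuinely non-abelian group --- such an automorphism is given by a map $N\to Sym(N)$ with invertible linear part, so it has a translation component in $Hom(N,R)$ (which moves the augmentation), a linear component (which is how the $N_i$ must first be glued into a module), and unipotent components $N\to Sym^d(N)$ for $d\geq 2$. A non-abelian \v{C}ech $1$-cocycle does not vanish merely because the base is affine; one must filter the automorphism group by degree, verify that after each adjustment the next-stage discrepancies really form a cocycle valued in the quasi-coherent sheaf attached to $Hom(N,Sym^d(N))$, and induct, having first produced the global augmentation (the set of augmentations of $A$ is a pseudo-torsor under a quasi-coherent sheaf only once local trivializations are in hand, so this step cannot be deferred). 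That induction is essentially the entire content of \cite{BCW}, and Suslin's independent proof replaces it by a different Quillen-patching computation. A smaller issue: your Noetherian reduction as phrased (``enlarge $R_0$ so that the hypothesis already holds'') does not directly make sense for a hypothesis quantified over all maximal ideals of $R$; one must first replace that hypothesis by the finite cover $\{g_i\}$ with $\sum_ia_ig_i=1$ and descend that finite package to $R_0$, so the cover has to precede the reduction. In short: right architecture and correct identification of $L$, but the actual proof of the theorem is exactly the part left as a placeholder.
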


The following result is known as Serre's Splitting Theorem \cite[Theorem 7.1.8]{SST}. 
\begin{thm}\label{sst}
Let $A$ be a Noetherian ring of finite Krull dimension. Let $P$ be a finitely generated 
projective $A$-module of rank greater than dimension of $A$. Then $P$ has a unimodular element. 
\end{thm}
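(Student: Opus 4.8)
This is a classical result of Serre, so I only sketch the standard argument; put $d=\dim A$.

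\emph{Reformulation.} An element $p\in P$ is unimodular if and only if the order ideal $\mathcal{O}_P(p):=\{\varphi(p):\varphi\in\operatorname{Hom}_A(P,A)\}$, which is an ideal of $A$, equals $A$ --- for then a $\varphi$ with $\varphi(p)=1$ splits $P=Ap\oplus\ker\varphi$. Equivalently, $p$ is unimodular precisely when the closed set $Z(p):=V(\mathcal{O}_P(p))\subseteq\Spec A$ is empty, and for a prime $\mathfrak{p}$ one has $\mathfrak{p}\notin Z(p)$ iff the image of $p$ in $P\otimes_A\kappa(\mathfrak{p})$ is nonzero (one says $p$ is \emph{basic} at $\mathfrak{p}$). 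Since $P$ is projective, $\dim_{\kappa(\mathfrak{p})}\bigl(P\otimes_A\kappa(\mathfrak{p})\bigr)=\operatorname{rank}_{\mathfrak{p}}P\ge d+1$ for every $\mathfrak{p}$; so at each \emph{individual} prime an element of $P$ basic there exists, and the content of the theorem is to produce one that is basic simultaneously at all primes.

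\emph{Dimension reduction on the bad locus.} I would argue by descending induction on $\dim Z(p)$. Starting from any $p_0\in P$ (say a generator) one trivially has $\dim Z(p_0)\le d$. The key step is: given $p\in P$ with $\dim Z(p)=t\ge0$, produce $p'\in P$ with $\dim Z(p')\le t-1$. Because $A$ is Noetherian, $Z(p)$ has only finitely many minimal primes $\mathfrak{p}_1,\dots,\mathfrak{p}_r$ of dimension exactly $t$; since $\operatorname{rank}_{\mathfrak{p}_j}P\ge d+1\ge t+1\ge1$ there is some $q_j\in P$ basic at $\mathfrak{p}_j$, and as the $\mathfrak{p}_j$ are pairwise incomparable, the ``addition of basic elements'' lemma of Serre/Eisenbud--Evans (a prime-avoidance/Chinese-remainder argument) combines the $q_j$ with $p$ into a single $p'\in P$ that is basic at every $\mathfrak{p}_j$ and remains basic wherever $p$ was basic, so $Z(p')\subseteq Z(p)$ has lost its top-dimensional components and hence $\dim Z(p')\le t-1$. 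Iterating this at most $d+1$ times yields $p$ with $Z(p)=\varnothing$, i.e.\ a unimodular element, whence $P\cong A\oplus P'$.

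I expect essentially all the work to lie in that ``combining'' step: selecting one global element of $P$ that gains basic-ness at the finitely many top-dimensional bad primes without losing it anywhere else (equivalently, without shrinking the order ideal outside $Z(p)$). This is the standard basic-element machinery, a somewhat delicate application of prime avoidance, and it is exactly here that the Noetherian hypothesis (finiteness of the relevant primes) and the strict bound $\operatorname{rank}P>\dim A$ (so that a local basic element is still available at every dimension down to~$0$) get used; the reformulation and the bookkeeping of the induction are routine.
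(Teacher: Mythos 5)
The paper does not prove this statement: it is Serre's Splitting Theorem, quoted verbatim from Ischebeck--Rao \cite[Theorem 7.1.8]{SST} as a known tool, so there is no internal proof to compare against. Your sketch follows the standard Serre/Eisenbud--Evans basic-element argument, which is exactly the proof in that reference: the reformulation via order ideals, the descending induction on the dimension of the non-basic locus, and the identification of the ``addition of basic elements'' lemma as the place where both the Noetherian hypothesis and the strict inequality $\operatorname{rank} P>\dim A$ are consumed are all correct, and you rightly flag that the entire substance lies in that combining lemma, which you leave as a black box. As a proof sketch of a classical theorem this is accurate and appropriately honest about what it omits.
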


Following is the famous Cancellation Theorem of Hyman Bass \cite[Theorem 7.1.11]{SST}.
\begin{thm}\label{bass}
Let $R$ be a Noetherian ring of dimension $d$ and $P$ a finitely generated projective $R$-module of rank $>d$. 
Then $P$ is ``cancellative'', i.e., $P \oplus Q \cong P^{'} \oplus Q$ for some finitely generated projective 
$R$-module $Q$ implies that $P \cong P^{'}$.
\end{thm}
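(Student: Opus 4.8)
This is the classical Cancellation Theorem of Bass; in the present paper it is invoked only through the citation \cite[Theorem 7.1.11]{SST}, but here is the route I would take to prove it. The two essential ingredients are Serre's Splitting Theorem (Theorem \ref{sst}) and Bass's Stable Range Theorem, to the effect that a commutative Noetherian ring $R$ with $\dim R = d$ has stable rank at most $d+1$. First I would reduce to the case $Q = R$: choosing a finitely generated projective $Q'$ with $Q \oplus Q' \cong R^m$ and adding $Q'$ to both sides of $P \oplus Q \cong P' \oplus Q$ gives $P \oplus R^m \cong P' \oplus R^m$. Localising at each prime shows $\operatorname{rank} P = \operatorname{rank} P'$, so every module of the form $P \oplus R^{j}$ or $P' \oplus R^{j}$ with $0 \leq j \leq m$ still has rank $> d$. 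Thus it suffices to prove: if $N, N'$ are finitely generated projective of rank $> d$ with $N \oplus R \cong N' \oplus R$, then $N \cong N'$; cancelling the $m$ copies of $R$ one at a time then finishes the argument.

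The key point is the following consequence of the Stable Range Theorem: if $N$ is finitely generated projective of rank $\geq d+1$, then $\operatorname{Aut}_R(N \oplus R)$ acts transitively on the set of unimodular elements of $N \oplus R$ (an element $x$ being unimodular when the ideal generated by $\{\varphi(x) : \varphi \in \operatorname{Hom}_R(N \oplus R, R)\}$ is all of $R$). Granting this, let $\phi : N' \oplus R \xrightarrow{\sim} N \oplus R$ be an isomorphism and set $e = (0,1) \in N' \oplus R$; then $u := \phi(e)$ is unimodular, $Re$ and $Ru$ are free rank-one direct summands, and $\phi(Re) = Ru$. Choosing $\psi \in \operatorname{Aut}_R(N \oplus R)$ with $\psi(u) = (0,1)$, the composite $\psi \circ \phi$ carries $Re$ isomorphically onto $R\cdot(0,1)$ and hence induces an isomorphism on quotients $N' \cong (N' \oplus R)/Re \xrightarrow{\sim} (N \oplus R)/R\cdot(0,1) \cong N$, as required.

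The main obstacle is the transitivity statement above. I would establish it in two stages. First, that $R$ Noetherian of dimension $d$ has stable rank $\leq d+1$: this is an induction on $d$, the inductive step using a prime-avoidance argument to transform a unimodular row of length $d+2$ into one of length $d+1$ by adding suitable multiples of its last entry to the others. Second, that this numerical bound, together with Serre's Splitting Theorem --- which already guarantees that a projective module of rank $> d$ contains a unimodular element, hence splits off a free direct summand --- lets one move an arbitrary unimodular element of $N \oplus R$ onto $(0,1)$ by a product of elementary transvections, after first peeling off free summands of $N$ until the residual rank is controlled. This is precisely the substance of Bass's stable-range theory, which the citation \cite[Theorem 7.1.11]{SST} packages up.
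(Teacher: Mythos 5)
The paper offers no proof of this statement---it is imported verbatim as a known tool via the citation \cite[Theorem 7.1.11]{SST}---so there is no internal argument to compare against. Your outline (reduce to cancelling a single free summand $R$ at a time, then deduce $N\cong N'$ from $N\oplus R\cong N'\oplus R$ by moving the unimodular element $\phi(0,1)$ back to $(0,1)$ via an automorphism, the needed transitivity of $\operatorname{Aut}_R(N\oplus R)$ on unimodular elements resting on Bass's stable range bound $\mathrm{sr}(R)\leq d+1$ for a $d$-dimensional Noetherian ring) is precisely the classical argument given in the cited reference, and the steps you make explicit are correct; the one piece you leave as a sketch---the transitivity statement, which is where all the work lies---is legitimately deferred to the stable-range theory that the citation packages.
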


We now state a local-global result for a graded ring \cite[Theorem 4.3.11]{SST}.
\begin{thm}\label{gr}
Let $S=S_{0} \oplus S_{1} \oplus S_{2} \dots $ be a graded ring and let $M$ be a finitely 
presented $S$-module. Assume that for every maximal ideal $\m$ of $S_{0}$, $M_{\m}$ is extended from 
$(S_{0})_{\m}$. Then $M$ is extended from $S_{0}$.
\end{thm}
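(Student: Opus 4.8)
\textbf{Proof proposal for Theorem~\ref{gr}.}
The plan is to encode the grading by an auxiliary ``scaling'' variable and thereby reduce the statement to the affine Quillen Patching Theorem over the base ring $S_{0}$. Write $S_{+}=\bigoplus_{i\geqslant 1}S_{i}$, so that $S_{0}=S/S_{+}$, and set $N:=M/S_{+}M=M\otimes_{S}S_{0}$; since $M$ is finitely presented over $S$, the $S_{0}$-module $N$ is finitely presented, and if $M$ is extended from $S_{0}$ then necessarily $M\cong S\otimes_{S_{0}}N$, so it is this particular isomorphism that I must produce. Introduce the polynomial ring $S[T]$ together with two $S_{0}$-algebra homomorphisms $\iota,\psi\colon S\to S[T]$: here $\iota$ is the usual inclusion, while $\psi$ sends $s$ to $\sum_{i}s_{i}T^{i}$, where $s=\sum_{i}s_{i}$ is the homogeneous decomposition --- this $\psi$ is multiplicative because the homogeneous components of a product are the Cauchy products of those of its factors. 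Setting $T=1$ turns each of $\iota,\psi$ into $\mathrm{id}_{S}$, while setting $T=0$ turns $\iota$ into $\mathrm{id}_{S}$ and turns $\psi$ into the composite $S\twoheadrightarrow S_{0}\hookrightarrow S$. Putting $P:=M\otimes_{S,\iota}S[T]$ and $Q:=M\otimes_{S,\psi}S[T]$, one checks directly that $M$ is extended from $S_{0}$ if and only if there is an $S[T]$-module isomorphism $\Theta\colon P\xrightarrow{\ \sim\ }Q$ that reduces to $\mathrm{id}_{M}$ modulo $(T-1)$: specializing such a $\Theta$ at $T=0$ gives $M\cong S\otimes_{S_{0}}N$, and conversely, when $M\cong S\otimes_{S_{0}}N$ both $P$ and $Q$ are canonically $S[T]\otimes_{S_{0}}N$.

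Next I would pass to the local situation. Fix a maximal ideal $\m$ of $S_{0}$. Since $\psi$ is $S_{0}$-linear it localizes well, and applying the criterion of the previous paragraph to the graded ring $S\otimes_{S_{0}}(S_{0})_{\m}$ --- together with the hypothesis that $M\otimes_{S_{0}}(S_{0})_{\m}$ is extended from $(S_{0})_{\m}$ --- yields an isomorphism $P\otimes_{S_{0}}(S_{0})_{\m}\xrightarrow{\sim}Q\otimes_{S_{0}}(S_{0})_{\m}$ over $S_{\m}[T]$ reducing to $\mathrm{id}_{M}$ modulo $(T-1)$. Thus the comparison isomorphism exists Zariski-locally on $\Spec S_{0}$. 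Now I invoke the appropriate form of Quillen's Patching Theorem --- the same machinery that underlies Theorem~\ref{bcw}, valid for finitely presented modules over $S[T]=S_{0}[T]\otimes_{S_{0}}S$ with no Noetherian hypothesis: the set $\mathfrak{a}$ of all $f\in S_{0}$ for which some isomorphism $P_{f}\xrightarrow{\sim}Q_{f}$ over $S_{f}[T]$ reduces to $\mathrm{id}_{M}$ modulo $(T-1)$ is an ideal of $S_{0}$. By the previous sentence $\mathfrak{a}$ is contained in no maximal ideal of $S_{0}$, hence $\mathfrak{a}=S_{0}$, so $1\in\mathfrak{a}$ and $\Theta$ exists globally. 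Therefore $M\cong S\otimes_{S_{0}}N$ is extended from $S_{0}$.

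I expect the final patching step to be the main obstacle: proving that $\mathfrak{a}$ is genuinely an ideal --- the nontrivial point being closure under addition --- which reduces, by the standard device of rescaling $T$ by units and using finite presentation to spread the local isomorphisms over basic opens of $\Spec S_{0}$, to Quillen's key lemma that an automorphism of a finitely presented $A[T]$-module which is the identity at one specialization of $T$ and is ``defined over'' $A_{f}$ and over $A_{g}$ is, after further localization, a product of automorphisms respectively defined over $A_{f}$ and $A_{g}$. Here finite presentation of $M$ (hence of $P$ and $Q$) is indispensable. Everything before this step is formal, so in the writeup I would either cite Quillen's Patching Theorem as a black box --- as the paper does --- or reproduce its standard short proof.
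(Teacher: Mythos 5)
Your proposal is correct in outline, but note that the paper does not prove Theorem~\ref{gr} at all: it is quoted as a known result with the citation [SST, Theorem 4.3.11] (Ischebeck--Rao), and the proof given there is essentially the one you reconstruct. Your reduction is the standard one: the substitution homomorphism $\psi(s)=\sum_i s_iT^i$ (well defined because the grading is a direct sum, multiplicative because homogeneous components of a product are Cauchy products), the observation that $Q:=M\otimes_{S,\psi}S[T]$ interpolates between $M$ at $T=1$ and $S\otimes_{S_0}(M/S_+M)$ at $T=0$, and then Quillen's Patching Theorem for finitely presented modules over $S[T]$. You correctly isolate the only non-formal ingredient, namely Quillen's splitting lemma (closure of the Quillen set under addition), which is a legitimate black box valid without Noetherian hypotheses for finitely presented modules. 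Two small points to make explicit in a writeup: (a) passing from the hypothesis at each $\m$ to an isomorphism over some $S_f[T]$ with $f\notin\m$ uses that $\operatorname{Hom}$ of finitely presented modules commutes with localization, and one must shrink the basic open once more so that the spread-out isomorphism still reduces to the identity modulo $(T-1)$; (b) in the equivalence of your first paragraph, only the implication ``$\Theta$ exists $\Rightarrow$ extended'' needs the specialization at $T=0$, while the normalization at $T=1$ is there solely so that the local isomorphisms can be patched. Neither point is a gap.
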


For convenience, we state below an elementary result.
\begin{lem}\label{domain}
Let $A$ and $B$ be integral domains with $A \subseteq B$. If there exists $f$ in $A$, such that
 $A_f=B_f$ and $fB \cap A=fA$, then $A=B$.
\end{lem}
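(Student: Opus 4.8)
The plan is to verify the reverse inclusion $B \subseteq A$ by a direct element-chasing argument, using the hypotheses $A_f = B_f$ and $fB \cap A = fA$ together with the fact that $B$ (hence $A$) is a domain, so in particular $f$ is a nonzerodivisor. First I would take an arbitrary $b \in B$. Since $A_f = B_f$ inside the common fraction field, there is an integer $n \geqslant 0$ and an element $a \in A$ with $b = a/f^n$, i.e. $f^n b = a \in A$. If $n = 0$ we are done, so assume $n \geqslant 1$ and argue by descending induction on $n$.

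The key step is to push the exponent down by one. From $f^n b = a \in A$ with $n \geqslant 1$ we have $a = f \cdot (f^{n-1} b)$, and $f^{n-1} b \in B$, so $a \in fB$. But also $a \in A$, hence $a \in fB \cap A = fA$. Write $a = f a'$ with $a' \in A$. Then $f^n b = f a'$, and since $f$ is a nonzerodivisor in $B$ we may cancel to get $f^{n-1} b = a' \in A$. This is exactly the same situation with $n$ replaced by $n-1$, so by induction we conclude $b \in A$. Since $b$ was arbitrary, $B \subseteq A$, and combined with the hypothesis $A \subseteq B$ this gives $A = B$.

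There is really no serious obstacle here; the only point requiring a moment's care is that $f$ is a nonzerodivisor, which is automatic because $B$ is an integral domain and, were $f = 0$, the hypothesis $A_f = B_f$ would be vacuous or ill-posed — so implicitly $f \neq 0$. Everything else is a routine cancellation inside the domain, and the induction on the exponent $n$ terminates after finitely many steps. I would present the argument in precisely the order above: reduce to showing $B \subseteq A$, express a general $b \in B$ with a denominator a power of $f$, and then strip off one factor of $f$ at a time using $fB \cap A = fA$.
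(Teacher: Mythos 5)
Your proof is correct and is essentially the same as the paper's: both write $b = a/f^n$, use $fB \cap A = fA$ to strip a factor of $f$ from the numerator, and descend on $n$ (the paper phrases the descent as a minimal-counterexample contradiction rather than induction, but the argument is identical).
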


\begin{proof}
Let $b \in B$. Suppose, if possible $b \notin A$. Now since $B_f=A_f$, we have $b \in A_f$. Hence there exist
 $a \in A$ and an integer $n > 0$ such that $b=a/f^{n}$. We may assume that $n$ is the least possible.
 But then $a \in fB \cap A=fA$. Let $a=fa_1$ for some $a_1 \in A$. Then $b=a_{1}/f^{n-1}$, contradicting 
 the minimality of $n$.
\end{proof}

\section{Nice Derivations}
In this section, we shall explore generalisations of the following theorem of Z. Wang \cite[Proposition 4.6]{W}.
\begin{thm}\label{Wang1}
Let $K$ be a field of characteristic zero and 
$K[X,Y,Z]=K^{[3]}$. Suppose that $D(\neq0) \in LND_{K}(K[X,Y,Z])$ satisfies $D^{2}X=D^{2}Y=D^{2}Z=0$. 
Then the following hold:
\begin{enumerate}
\item[\rm (i)]$Ker$ $D$ contains a nonzero linear form of \{$X,Y,Z$\}.
\item[\rm (ii)]$rank$ $D=1$.
\item[\rm (iii)]If $D$ is irreducible, then for some coordinate system ($X^{'},Y^{'},Z^{'}$) of $K[X,Y,Z]$ related to ($X,Y,Z$) 
by a linear change, 
\begin{equation*}
D=f(X^{'})\frac{\partial}{\partial Y^{'}}+g(X^{'})\frac{\partial}{\partial Z^{'}}
\end{equation*}
where $f$, $g \in K[X^{'}]$ and ${\rm gcd}_{K[X^{'}]}(f,g)=1$.
\end{enumerate}
\end{thm}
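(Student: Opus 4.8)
The plan is to prove part (i) first — that $\ker D$ contains a nonzero linear form — and then obtain (ii) and (iii) almost formally from Lemma~\ref{UFDWang}. Set $A := \ker D$ and $a := DX$, $b := DY$, $c := DZ$; the hypothesis gives $a,b,c \in A$ and $D = a\partial_X + b\partial_Y + c\partial_Z$. Because $D(\alpha X + \beta Y + \gamma Z) = \alpha a + \beta b + \gamma c$ for $\alpha,\beta,\gamma \in K$, statement (i) is precisely the assertion that $a,b,c$ are linearly dependent over $K$. If one of $a,b,c$ is zero, the corresponding variable is a linear form in $A$ and (i) is immediate, so assume $a,b,c \neq 0$. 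I would then reduce to the case that $D$ is irreducible: put $g_0 := \gcd_B(a,b,c)$, which lies in $A$ since $A$ is factorially closed in the UFD $B$; writing $D = g_0 D_0$, the derivation $D_0$ is irreducible with $\ker D_0 = A$ and $D_0 X = a/g_0,\ D_0 Y = b/g_0,\ D_0 Z = c/g_0 \in A \setminus \{0\}$, so $D_0$ is again a nice derivation, and $a,b,c$ are $K$-dependent iff $D_0 X, D_0 Y, D_0 Z$ are. So we may assume $\gcd_B(a,b,c) = 1$.

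For such $D$, note first that $X$ is a local slice ($DX = a \neq 0$, $D^2 X = 0$). Extending $D$ to $B_a$ — where $X/a$ is a slice, since $D(X/a) = a^{-1}DX = 1$ — the Slice Theorem (Theorem~\ref{st}) gives $B_a = A_a[X]$ with $D = \partial_{X/a}$; the conditions $D^2 Y = D^2 Z = 0$ then force $Y,Z$ to have $X$-degree $\le 1$ over $A_a$, say $Y = p_0 + (b/a)X$, $Z = q_0 + (c/a)X$ with $p_0, q_0 \in A_a$. Hence $u := aY - bX = ap_0$ and $v := aZ - cX = aq_0$ lie in $B \cap A_a = A$ (equivalently, $Du = Dv = 0$ by direct computation), and likewise $w := bZ - cY \in A$, with the syzygy $cu - bv + aw = 0$. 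What is \emph{not} yet clear — and this is the crux of (i) — is that some combination of $aY - bX,\ aZ - cX,\ bZ - cY$ has coefficients in $K$, i.e. is a genuine linear form.

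To force this I would exploit two global constraints. First, since $D \neq 0$ we have $\td_K A = 2$, so $K[a,b,c] \subseteq A$ has transcendence degree $\le 2$; in particular $a,b,c$ are algebraically dependent over $K$, and one can treat the subcase $\td_K K[a,b,c] = 0$ at once (then $a,b,c \in K$) and the subcase $\td_K K[a,b,c] = 1$ via Theorem~\ref{aeh}, which places $a,b,c$ inside a polynomial subring $K[\theta] \subseteq A$. Second, the Jacobian matrix $N := \bigl(\partial(DX_i)/\partial X_j\bigr)_{i,j}$ is nilpotent: as $D^2 X_i = 0$, the $K[t]$-automorphism $\exp(tD)$ of $B[t]$ carries $X_i \mapsto X_i + tDX_i$, so its Jacobian $\det(I + tN)$ is a unit of $B[t] = K^{[4]}$, hence identically $1$; expanding, every elementary symmetric function of $N$ vanishes, so $\operatorname{tr} N = \operatorname{div} D = 0$, $\det N = 0$ and $N^3 = 0$. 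The equality $\det N = 0$ says the gradients $\nabla a, \nabla b, \nabla c$ are $\operatorname{Frac}(B)$-dependent; feeding this nilpotence, the transcendence bound, and the relations $u,v,w \in A$ into one another should pin down the linear form. I expect this to be the principal difficulty of the proof.

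Granting (i), the remainder is short. Pick a nonzero linear form $\ell \in A$ and complete it to a linear coordinate system $(U,V,\ell)$ of $B$. Since $U,V$ are $K$-linear combinations of $X,Y,Z$ and $D^2$ is $K$-linear, $D^2 U = D^2 V = 0$, while $D\ell = 0$; thus $D$ is a locally nilpotent derivation of $R[U,V]$ with $R := K[\ell] = K^{[1]}$ a PID containing $\bQ$. When $D$ is irreducible — the hypothesis of (iii) — Lemma~\ref{UFDWang}(ii) gives $D = b'\partial_U - a'\partial_V$ with $a',b' \in R$, $\gcd_R(a',b') = 1$, and $\ker D = R[a'U + b'V]$; taking $X' := \ell$, $Y' := U$, $Z' := V$, which is a linear change of $(X,Y,Z)$, we obtain $D = f(X')\partial_{Y'} + g(X')\partial_{Z'}$ with $f := b'$, $g := -a'$ and $\gcd_{K[X']}(f,g) = 1$, which is (iii). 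For (ii) in general, reduce once more to the irreducible case through $D = g_0 D_0$ — here $D_0 \ell = 0$ and $D_0^2 U = D_0^2 V = 0$ still hold, since $g_0 = \gcd_B(DU,DV) \in A = \ker D_0$ — so that $\ker D = \ker D_0 = R[a'U + b'V]$ with $\gcd_R(a',b') = 1$. Since $R$ is a PID, pick $c',d' \in R$ with $a'd' - b'c' = 1$; then $(a'U + b'V,\ c'U + d'V,\ \ell)$ is a coordinate system of $B$ with $a'U + b'V, \ell \in A$ but $c'U + d'V \notin A$, so $\operatorname{rank} D \le 1$, and therefore $\operatorname{rank} D = 1$ as $D \neq 0$.
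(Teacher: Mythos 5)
This theorem is quoted in the paper from Wang \cite[Proposition 4.6]{W} without proof, so there is no in-paper argument to compare against; your attempt has to stand on its own, and it does not. The deduction of (ii) and (iii) from (i) is essentially correct: once a nonzero linear form $\ell$ lies in $\ker D$, completing $\ell$ to a linear coordinate system and applying Lemma~\ref{UFDWang}(ii) over $R=K[\ell]$ gives (iii), and the unimodular completion of $(a',b')$ gives rank $1$; the reduction to the irreducible case via $D=g_0D_0$ is also fine. But part (i) --- the assertion that $DX$, $DY$, $DZ$, which all lie in $\ker D$, are linearly dependent over $K$ --- is the entire content of the theorem, and you do not prove it. You say so yourself: after producing the kernel elements $aY-bX$, $aZ-cX$, $bZ-cY$ and their syzygy, you write that the nilpotence of the Jacobian matrix, the bound $\operatorname{tr.deg}_K K[a,b,c]\le 2$, and these relations ``should pin down the linear form.'' That is a statement of hope, not an argument. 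None of the listed ingredients visibly forces $K$-linear dependence: nilpotence of the Jacobian matrix $N$ holds for the components of many derivations without any linear relation among them, and algebraic dependence of $a,b,c$ (which is automatic from $\operatorname{tr.deg}_KA=2$) is far weaker than linear dependence.

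The missing idea is the one in the title of Wang's paper: \emph{homogenization} of locally nilpotent derivations. Wang passes to the highest-degree homogeneous parts of $DX,DY,DZ$ with respect to a suitable grading, shows that the resulting homogeneous derivation is again locally nilpotent and nice, and then analyzes homogeneous nice derivations of $K[X,Y,Z]$ to extract the linear form; nothing in your sketch substitutes for this reduction. As written, the proposal establishes the (comparatively routine) implications (i)$\Rightarrow$(ii),(iii) but leaves the theorem's key claim unproved.
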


We first observe the following result.

\begin{lem}\label{UFD}
Let $R$ be a UFD containing $\bQ$ and $D$($\neq0$) $\in LND_{R}(R[X,Y,Z])$, where $R[X,Y,Z]=R^{[3]}$ and $rank$ $D$ $< 3$.
Then $Ker$ $D = R^{[2]}$.
\end{lem}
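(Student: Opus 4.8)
The plan is to exploit the hypothesis $\operatorname{rank} D < 3$ to reduce to a situation where one variable lies in the kernel, and then invoke Theorem \ref{df} over a suitable polynomial base ring. First I would use the definition of rank: since $\operatorname{rank} D < 3$, there is a coordinate system $(X_1, X_2, X_3)$ of $B = R[X,Y,Z]$ with $R[X_3] \subseteq A := \operatorname{Ker} D$. Relabel so that $B = R[X_1, X_2, X_3]$ and set $R' := R[X_3]$, which is again a UFD containing $\bQ$ (a polynomial ring over a UFD). Then $D$ is an $R'$-derivation of $B = R'[X_1, X_2] = R'^{[2]}$, and $D$ is locally nilpotent as an $R'$-derivation since it already was as an $R$-derivation.

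Next I would like to apply Theorem \ref{df} to $D \in LND_{R'}(R'[X_1,X_2])$, but that requires $D \neq 0$, which is given, so Theorem \ref{df} yields $\operatorname{Ker}_{R'} D = R'[F] = R'^{[1]}$ for some $F$. Since $R'$-derivations and $R$-derivations of $B$ that are locally nilpotent have the same kernel when we are careful — more precisely, $\operatorname{Ker}_{R'} D = \operatorname{Ker}_R D \cap \ldots$ — actually the kernel of $D$ as an abstract derivation is a single set $A$, and the only subtlety is whether $R' \subseteq A$, which holds by construction since $R[X_3] \subseteq A$ and $R \subseteq A$. Hence $A = \operatorname{Ker} D = R'[F] = R[X_3][F] = R^{[2]}$, as desired. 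The one point needing a short argument is that the kernel of $D$ viewed over $R'$ coincides with the full kernel $A$: since $D$ is $R$-linear and kills $X_3$, it is automatically $R'$-linear, and its kernel as an $R'$-derivation is exactly $\{b \in B : Db = 0\} = A$; there is no ambiguity.

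The main obstacle — really the only place where care is needed — is verifying the hypotheses of Theorem \ref{df} cleanly, namely that $R'$ is a UFD containing $\bQ$ (immediate: $R'= R^{[1]}$ with $R$ a UFD $\supseteq \bQ$, and polynomial extensions of UFDs are UFDs) and that $D$ restricted appropriately is a nonzero locally nilpotent $R'$-derivation of $R'^{[2]}$ (nonzero by hypothesis on $D$; locally nilpotent because local nilpotence is a property of $D$ as an additive map, independent of the coefficient ring). Once these are in place, Theorem \ref{df} does all the work and gives $\operatorname{Ker} D = R'[F] = R^{[2]}$. I do not anticipate needing the Slice Theorem, Lemma \ref{UFDWang}, or any of the projective-module machinery for this particular lemma; those are presumably for the subsequent results in the section.
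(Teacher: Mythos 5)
Your proof is correct, but it takes a genuinely different route from the paper's. The paper also begins by extracting a coordinate $X'$ with $R[X,Y,Z]=R[X']^{[2]}$ and $X'\in A$, but then sets $C=R[X']$ and applies the Abhyankar--Eakin--Heinzer result (Theorem \ref{aeh}): since $A=\operatorname{Ker}D$ is factorially closed in $C^{[2]}$ and $\td_{C}A=1$ (because $\td_{A}B=1$ for a nonzero locally nilpotent derivation and $\td_{C}B=2$), AEH gives $A=C^{[1]}=R^{[2]}$ directly. You instead view $D$ as a nonzero element of $LND_{R'}(R'[X_1,X_2])$ with $R'=R[X_3]$ and invoke the Daigle--Freudenburg structure theorem (Theorem \ref{df}) to conclude $\operatorname{Ker}D=R'[F]=R^{[2]}$; your checks that $R'$ is a UFD containing $\bQ$, that $D$ is $R'$-linear because it kills $X_3$, and that the kernel is independent of the chosen base ring are all the points that need saying, and they are right. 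The trade-off: the AEH route uses only the soft facts that the kernel is factorially closed and has the right transcendence degree, so it bypasses the two-variable structure theory entirely; your route leans on the deeper Theorem \ref{df} but yields as a by-product an explicit single generator $F$ of the kernel over $R[X_3]$ (essentially the content later exploited in Lemma \ref{UFDWang} and Theorem \ref{PID}(iii)). Both theorems are already quoted in the paper, so either is a legitimate one-step proof.
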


\begin{proof}
Let $A:=Ker$ $D$.
Since $rank$ $D$ $< 3$, there exists $X^{'}\in R[X,Y,Z]$ such that $R[X,Y,Z]=R[X^{'}]^{[2]}$ and $X^{'} \in A$. Then 
taking $C=R[X^{'}]$, it follows from Theorem \ref{aeh} that $A(=Ker$ $D)=R[X^{'}]^{[1]} = R^{[2]}$.
\end{proof}

The following example shows that Lemma \ref{UFD} does not extend to a Noetherian normal domain $R$ 
which is not a UFD.

\begin{ex}\label{dd}
{\em
Let $\bR[a,b]=\bR^{[2]}$ and $R:=\frac{\bR[a,b]}{(a^{2}+b^{2}-1)}$. Let $B:=R[X,Y,Z]=R^{[3]}$ and 
$D$ be an $R$-linear $LND$ of $B$, such that $$DX=a, \quad DY=b-1 \quad \text{and} \quad DZ=aY+(1-b)X.$$ 
Setting $u=aY+(1-b)X$, $v=(1+b)Y+aX$ and $w=2Z+uY-vX$, we see that $Du=Dv=Dw=0$ and $D^{2}X=D^{2}Y=D^{2}Z=0$. 

Let $A:=Ker$ $D$. 
Now $B_{(1+b)}=R_{(1+b)}[v,w,X]$ and $B_{(1-b)}=R_{(1-b)}[u,w,Y]$. Thus it follows that 
$A_{(1+b)}=R_{(1+b)}[v,w]={R_{(1+b)}}^{[2]}$ and $A_{(1-b)}=R_{(1-b)}[u,w]={R_{(1-b)}}^{[2]}$. 
Since $(1+b)$ and $(1-b)$ are comaximal elements 
of $R$, $A=R[u,v,w]$ and $A_{\m}={R_{\m}}^{[2]}$ for every maximal ideal $\m$ of $R$. 

Now $B=R[X,Y,Z]=R[X,Y,w]$ and $w\in A$; so $rank$ $D$ $<3$. 
Setting $T=\frac{u}{a}$, we see that $A=R[aT,(1+b)T,w]$. By Theorems \ref{bcw} and \ref{sst}, 
$A=Sym_{R}(F \oplus P)$, where $F$ is a free $R$-module of rank $1$ and $P$ is a rank $1$ projective 
$R$-module given by the ideal $(a,1+b)R$, which is not principal. Hence $P$ is not stably free and 
so $A\neq R^{[2]}$ \\ 
\cite[Lemma 1.3]{EH}.
}
\end{ex}

\begin{rem}
{\em In Proposition \ref{DD}, we will see that over any Dedekind domain $R$, the kernel of a nice derivation 
of $R^{[3]}$ is generated by (at most) three elements.
}
\end{rem}

The following example shows that Part (ii) of Theorem \ref{Wang1} does not hold when $K$ is replaced by a PID $R$.

\begin{ex}\label{PIDex}
{\em Let $k$ be a field of characteristic zero, $R=k{[t]}=k^{[1]}$ and $B:=R[X,Y,Z](=R^{[3]})$. Let $D\in LND_R(B)$ be such 
that $$DX=0, \quad DY=X-t \quad \text{and} \quad DZ=X+t.$$ Let $A=Ker$ $D$ and $G:=(X-t)Z-(X+t)Y$. We will show that 
\begin{enumerate}
 \item [\rm (i)] $A=R[X,G]$.
 \item [\rm (ii)] $B\neq A^{[1]}$; in fact, $B$ is not $A$-flat.
 \item [\rm (iii)] $rank$ $D=2$.
\end{enumerate}

\begin{proof}
(i) Let $C:=R[X,G]$. We show that $C=A$. Clearly $C \subseteq A$. Set $f:=X-t$. 
Then $B_{f}={R[X,G,Y]}_{f}={C_{f}}^{[1]}$.
Hence, as both $C_{f}(\subseteq A_{f})$ and $A_{f}$ are factorially closed subrings of $B_{f}$ and as 
$\td_{C_{f}}{B_{f}}=1=\td_{A}{B}$, we have $C_{f}=A_{f}$.

Now $B/fB$ may be identified with $R[Y,Z](=R^{[2]})$. Clearly $C/fC=R^{[1]}$ and the image of $C/fC$ in $B/fB$ is 
$R[tY](=R^{[1]})$. Thus the natural map $C/fC \rightarrow B/fB$ is injective, i.e, $fB \cap C=fC$. 
Since $A$ is factorially closed in $B$, we also have $fB \cap A=fA$ and hence 
$fA \cap C=fB \cap A \cap C=fB \cap C=fC$. Therefore as $C_{f}=A_{f}$, we have $C=A$ by Lemma \ref{domain}. 

(ii) $(X-t,X+t)B$ is a prime ideal of height $2$ in $B$ and $(X-t,X+t)B \cap A=(X,t,G)A$ is a prime ideal of height 
$3$ in $A$, violating the going-down principle. Hence $B$ is not $A$-flat and therefore $B \neq A^{[1]}$.

(iii) Since $DX=0$, $rank$ $D < 3$. If $rank$ $D=1$, then clearly $B=A^{[1]}$ contradicting (ii). Hence $rank$ $D=2$. 
\end{proof}
}
\end{ex}

We now prove an extension of Theorem \ref{Wang1} over a PID.
\begin{thm}\label{PID}
Let $R$ be a PID containing $\bQ$ with field of fractions $L$ and \\ $B:=R[X,Y,Z]=R^{[3]}$.
Let $D(\neq0) \in LND_{R}(B)$, and $A:=Ker$ $D$. Suppose that $D$ is irreducible and $D^{2}X=D^{2}Y=D^{2}Z=0$. 
Then there exists a coordinate system $(U,V,W)$ of $B$ related to $(X,Y,Z)$ by a linear change such that the following hold:
\begin{enumerate}
 \item [\rm (i)] $A$ contains a nonzero linear form of $\{X,Y,Z\}$.
 \item [\rm (ii)] $rank$ $D\leq 2$. In particular, $A=R^{[2]}$.
 \item [\rm (iii)] $A=R[U,gV-fW]$, where $DV=f$, $DW=g$, and $f,g\in R[U]$ such that $gcd_{R[U]}(f,g)=1$.
 \item [\rm (iv)] Either $f$ and $g$ are comaximal in $B$ or they form a regular sequence in $B$. Moreover if they are 
 comaximal, (i.e., $D$ is fixed-point free) then $B=A^{[1]}$ and $rank$ $D=1$; and if they form a regular sequence, 
 then $B$ is not $A$-flat and $rank$ $D=2$. 
 \end{enumerate}
\end{thm}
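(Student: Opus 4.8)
The plan is to reduce everything to the field case (Theorem~\ref{Wang1}) by base-changing to $L$, then to descend the structural information back to $R$ using the fact that $R$ is a PID. First I would extend $D$ to $D_L \in LND_L(L[X,Y,Z])$; since $D^2X = D^2Y = D^2Z = 0$ still holds, Theorem~\ref{Wang1}(i),(iii) gives a nonzero linear form $\ell \in \ker D_L$ with coefficients in $L$, and after clearing denominators and dividing by the gcd of the coefficients we may take $\ell$ to be a primitive linear form over $R$; because $R$ is a PID this $\ell$ can be completed to a coordinate system $(U, V', W')$ of $B$ with $U = \ell$. Since $A$ is factorially closed in $B$ and $\ell \in A$ (it lies in $\ker D_L \cap B \subseteq A$ because $A = \ker D_L \cap B$ by \cite[1.1(2)]{DF}), this proves (i), and immediately $rank\, D \le 2$. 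For (ii), Lemma~\ref{UFD} then gives $A = R^{[2]}$.

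Next, for (iii), I would use the linear change to arrange that $DU = 0$. Write $B = R[U][V,W]$ with $R[U] = R^{[1]}$, and observe that $D$ restricts to an $R[U]$-linear locally nilpotent derivation of $R[U][V,W] = R[U]^{[2]}$ which is still irreducible (a common factor of $DV, DW$ in $B$ would have to be a unit since $D$ is irreducible, but one has to check the factor can be taken in $R[U]$ — this is where the coordinate choice matters). Actually the cleaner route: after the linear change the conditions $D^2X = D^2Y = D^2Z = 0$ translate into $D^2V = D^2W = 0$ as well (linear combinations of the original coordinates), and $D^2U = 0$ trivially. Then Lemma~\ref{UFDWang}(ii), applied over the PID $R[U]$ — wait, $R[U]$ is a PID only if $R$ is a field, so instead I apply Lemma~\ref{UFDWang}(i) or rather argue directly: since $DU = 0$ and $D$ is locally nilpotent on $R[U]^{[2]}$ with $D^2V = D^2W = 0$, we get $DV = f, DW = g \in R[U]$ (the "moreover" parts of Lemma~\ref{UFDWang}(i) give $DV, DW \in R[U]$ after checking irreducibility descends), and irreducibility forces $\gcd_{R[U]}(f,g) = 1$. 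Then $gV - fW \in A$, and $R[U, gV-fW] \subseteq A$ with both sides factorially closed of the same transcendence degree over $R[U]$; a localization-and-Lemma~\ref{domain} argument (exactly as in Example~\ref{PIDex}(i)) shows equality.

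For (iv), I would invoke Theorem~\ref{bd97} applied to the irreducible derivation $D$ on $R[U][V,W] = R[U]^{[2]}$: its kernel $R[U, gV-fW]$ being a polynomial ring in one variable over $R[U]$ forces $f = DV$ and $g = DW$ to either form a regular sequence in $R[U][V,W] = B$ or be comaximal in $B$. If they are comaximal, then $D$ is fixed-point free, so Theorem~\ref{fpf} gives a slice, whence $B = A^{[1]}$ by the Slice Theorem~\ref{st}(a), and then $rank\, D = 1$ since a slice $s$ with $Ds = 1$ lies outside $A$ while $A = R^{[2]}$ provides a full coordinate system contained in the kernel. If instead $f, g$ form a regular sequence, then $(f,g)B$ is a height-two prime (or at least an ideal whose contraction to $A$ has height three), and the going-down failure — argued exactly as in Example~\ref{PIDex}(ii) with $X-t, X+t$ replaced by $f, g$ — shows $B$ is not $A$-flat, hence $B \neq A^{[1]}$ and so $rank\, D = 2$.

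The main obstacle I anticipate is the descent of irreducibility and the "$DV, DW \in R[U]$" claim: Theorem~\ref{Wang1} is stated over a field, and while the linear form $\ell$ descends easily, making sure that after the change of coordinates the derivation $D$ on $R[U]^{[2]}$ is still \emph{irreducible} (so that Lemma~\ref{UFDWang} and Theorem~\ref{bd97} apply) requires care — one must rule out that $DV$ and $DW$ share a common non-unit factor in $B$, which is exactly the irreducibility of the original $D$, but one should verify no extra factor is introduced by the base change to $L$ and back. A secondary technical point is confirming that $D^2V = D^2W = 0$ survives the linear change (it does, since $V, W$ are $R$-linear combinations of $X, Y, Z$ and $D$ is $R$-linear), and that the gcd condition $\gcd_{R[U]}(f,g)=1$ is equivalent to irreducibility of $D$ on $R[U]^{[2]}$ rather than merely on $L[U]^{[2]}$.
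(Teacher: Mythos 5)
Your proposal is correct and follows essentially the same route as the paper's proof: extend $D$ to $L[X,Y,Z]$, apply Theorem~\ref{Wang1} to get a linear form in the kernel, scale it to a primitive form over $R$ and complete the resulting unimodular row to a coordinate system $(U,V,W)$, then view $D$ as an irreducible derivation of $R[U]^{[2]}$ and invoke Lemma~\ref{UFDWang} for (iii) and Theorem~\ref{bd97} together with the going-down obstruction for (iv). The only place you diverge is an unnecessary detour in (iii): Lemma~\ref{UFDWang}(ii) is stated for a UFD (only part (iii) of that lemma requires a PID), so the paper applies it directly over $R[U]$, and your worry about irreducibility ``descending'' is moot since irreducibility is a property of $D$ as a derivation of $B$ that does not depend on the chosen coordinates or base ring.
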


\begin{proof}
(i) $D$ extends to an $LND$ of $L[X,Y,Z]$ which we denote by $\overline{D}$. By Theorem \ref{Wang1} there exists a coordinate 
system $(U,V^{'},W^{'})$ of $L[X,Y,Z]$ related to $(X,Y,Z)$ by a linear change and mutually coprime polynomials 
$p(U)$, $q(U)$ in $L[U]$ for which 
\begin{equation*}
\overline{D}=p(U)\frac{\partial}{\partial V^{'}}+q(U)\frac{\partial}{\partial W^{'}}.
\end{equation*}
Multiplying by a suitable nonzero element of $R$, we can assume $U \in R[X,Y,Z]$. 
Clearly $A=Ker$ $\overline{D} \cap R[X,Y,Z]$ and $U \in A$.
Moreover without loss of generality we can assume that there exist $l,m,n \in R$ with $gcd_R(l,m,n)=1$ such that
$U=lX+mY+nZ$. As $R$ is a PID, $(l,m,n)$ is a unimodular row of $R^{3}$ and hence can be completed to an 
invertible matrix $M \in GL_3(R)$. Let 
$
\begin{pmatrix}
U\\
V \\
W
\end{pmatrix}
=
M
\begin{pmatrix}
X \\Y\\ Z
\end{pmatrix}$.\\
Then $R[U,V,W]=R[X,Y,Z]$ and as $U \in A$, $A$ contains a nonzero linear form in ${X,Y,Z}$.

(ii) Follows from (i) and Lemma \ref{UFD}.

(iii) $R[U]$ is a UFD and $B=R[U,V,W]={R[U]}^{[2]}$. So $D$ is a locally nilpotent $R[U]$-derivation of $B$. 
Now the proof follows from Part (ii) of Lemma \ref{UFDWang}.

(iv) Since $B=R[U,V,W]={R[U]}^{[2]}$, the first part follows from Theorem \ref{bd97}. Moreover when $f$ and $g$ are 
comaximal in $B$, it also follows from Theorem \ref{bd97} that $B=A^{[1]}$. Hence in this case $rank$ $D=1$.

If $f$ and $g$ form a regular sequence in $B$ (and hence in $A$ since $A$ is factorially closed in $B$), 
$(f,g)B \cap A=(f,g,gV-fW)A$. But $(f,g,gV-fW)A$ is an ideal of height $3$, while $(f,g)B$ is an ideal of height $2$, 
violating the going-down principle. It follows that in this case $B$ is not $A$-flat. In this case indeed $rank$ $D=2$, 
or else if $rank$ $D=1$, we would have $B=A^{[1]}$.
\end{proof}

The proof of Theorem \ref{PID} shows the following: 
\begin{cor}\label{PIDflat}
With the notation as above, the following are equivalent:
\begin{enumerate}
 \item [\rm (i)] $B=A^{[1]}$.
 \item [\rm (ii)] $rank$ $D=1$.
 \item [\rm (iii)] $B$ is $A$-flat.
\end{enumerate}
\end{cor}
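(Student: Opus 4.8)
The plan is to read off the equivalences directly from the proof of Theorem \ref{PID}, so the corollary is essentially a bookkeeping exercise rather than a fresh argument. Throughout I keep the notation of Theorem \ref{PID}: $R$ is a PID containing $\bQ$, $B=R^{[3]}$, $D$ is a nonzero irreducible locally nilpotent $R$-derivation with $D^2X=D^2Y=D^2Z=0$, and $A=\operatorname{Ker}D$, which by Theorem \ref{PID}(iii) has the form $A=R[U,gV-fW]$ with $f=DV$, $g=DW\in R[U]$ and $\gcd_{R[U]}(f,g)=1$.

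First I would establish $(i)\Rightarrow(iii)$, which is immediate: a polynomial ring $A^{[1]}$ is certainly faithfully flat (indeed free) over $A$. For $(ii)\Rightarrow(i)$: if $\operatorname{rank}D=1$ then by definition there is a coordinate system $(X_1,X_2,X_3)$ of $B$ with $R[X_2,X_3]\subseteq A$; since $\operatorname{tr.deg}_A B=1$ and $A$ is factorially closed in $B$, one gets $A=R[X_2,X_3]=R^{[2]}$ with $B=A[X_1]=A^{[1]}$ (this is exactly the reasoning already used inside the proof of Theorem \ref{PID}(iv) and in Example \ref{PIDex}(iii)). So far this gives $(ii)\Rightarrow(i)\Rightarrow(iii)$.

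The one genuine step is $(iii)\Rightarrow(ii)$, and here I would invoke the dichotomy proved in Theorem \ref{PID}(iv): either $f$ and $g$ are comaximal in $B$, or they form a regular sequence in $B$. In the comaximal case Theorem \ref{PID}(iv) already tells us $B=A^{[1]}$ and $\operatorname{rank}D=1$, so we are done. In the regular-sequence case, the proof of Theorem \ref{PID}(iv) exhibits a failure of going-down: $(f,g)B$ is a height-$2$ prime (or at least a height-$2$ ideal arising from the regular sequence) whose contraction $(f,g)B\cap A=(f,g,gV-fW)A$ has height $3$ in $A$; hence $B$ is \emph{not} flat over $A$, contradicting hypothesis $(iii)$. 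Therefore $(iii)$ forces the comaximal case, whence $\operatorname{rank}D=1$. Assembling the implications $(i)\Rightarrow(iii)\Rightarrow(ii)\Rightarrow(i)$ completes the cycle.

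The only point requiring a little care — and the place I would be most careful to cite precisely — is the going-down argument: one must be sure that in the regular-sequence case the contracted ideal really has height $3$ and that its existence genuinely obstructs flatness (flat extensions of Noetherian domains satisfy going-down, so a prime of $B$ lying over a strictly higher-codimension prime of $A$ is impossible). Since the excerpt has already carried out this computation inside the proof of Theorem \ref{PID}(iv), I would simply refer to it rather than redo it, and the corollary then follows formally. In short: the proof is a short diagram-chase $(i)\Rightarrow(iii)\Rightarrow(ii)\Rightarrow(i)$ built entirely on facts extracted from the preceding proof, with the non-flatness in the regular-sequence case being the lone substantive ingredient.
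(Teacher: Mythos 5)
Your proposal is correct and follows essentially the same route as the paper: the authors also dispose of the implications among (i), (ii), (iii) as trivial except for the direction out of (iii), which they, like you, settle by appealing to the comaximal/regular-sequence dichotomy and the going-down obstruction established in the proof of Theorem \ref{PID}(iv). The only cosmetic difference is that the paper closes the cycle via (iii)$\Rightarrow$(i) rather than (iii)$\Rightarrow$(ii).
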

\begin{proof}
 (i)$\Leftrightarrow$(ii) and (ii)$\Rightarrow$(iii) are trivial. (iii)$\Rightarrow$(i) follows from Theorem \ref{PID}(iv). 
\end{proof}

As mentioned in the Introduction, Theorem \ref{PID} shows that the kernel of an irreducible nice derivation of $k^{[4]}$ 
of $rank\leq 3$ is $k^{[3]}$. More precisely, we have:
\begin{cor}\label{4variables}
Let $K$ be a field of characteristic zero and let $K[X_1,X_2,X_3,X_4]=K^{[4]}$. Let 
$D\in LND_{K}(K[X_1,X_2,X_3,X_4])$, be such that
$D$ is irreducible and $DX_1=0$ and $D^{2}X_i=0$ for $i=2,3,4$. Then $Ker$ $D = K^{[3]}$.
\end{cor}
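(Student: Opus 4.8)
The plan is to reduce the assertion directly to Theorem \ref{PID} by regarding $B := K[X_1,X_2,X_3,X_4] = K^{[4]}$ as a polynomial ring in three variables over the PID $R := K[X_1] = K^{[1]}$. Since $K$ has characteristic zero we have $\bQ \subseteq K \subseteq R$; and the hypotheses $DX_1 = 0$ and $D^2X_i = 0$ for $i = 2,3,4$ say precisely that $D$ is an irreducible nice derivation of $K^{[4]}$ of rank at most $3$ (the coordinate system $(X_2,X_3,X_4,X_1)$ witnesses $rank~D \leq 3$, and $D^2X_1 = 0$ follows from $DX_1 = 0$). So this is exactly the rank $\leq 3$ case referred to in the discussion preceding the corollary.

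First I would note that $DX_1 = 0$ forces $D$ to annihilate all of $R = K[X_1]$, since $D(X_1^{j}) = jX_1^{j-1}DX_1 = 0$ for every $j \geq 0$; hence $D$ is an $R$-derivation, and therefore $D \in LND_R(B)$ with $B = R[X_2,X_3,X_4] = R^{[3]}$. Next, $D \neq 0$: the zero derivation is reducible, since $(0) \subseteq (b)B$ for any non-unit $b \in B$, so the irreducibility hypothesis excludes it. Moreover the notion of irreducibility is unaffected by the change of base ring from $K$ to $R$, as $B$ has the same units over both. Thus $D$ is a nonzero irreducible element of $LND_R(B)$ with $B = R^{[3]}$ satisfying $D^2X_i = 0$ for $i = 2,3,4$, and the hypotheses of Theorem \ref{PID} are met over the PID $R$ with $(X,Y,Z) = (X_2,X_3,X_4)$.

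Theorem \ref{PID}(ii) then yields $Ker~D = R^{[2]} = (K^{[1]})^{[2]} = K^{[3]}$, which is the desired conclusion.

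I do not expect any real obstacle: all the substance is contained in Theorem \ref{PID} (and through it in Theorem \ref{Wang1} and Lemmas \ref{UFD} and \ref{UFDWang}). The only points needing (routine) verification are that $D$ descends to an $R$-derivation of $R^{[3]}$, that irreducibility and the vanishing of $D^2X_i$ persist in this relative setting, and that $D \neq 0$; once these are in place the corollary is immediate.
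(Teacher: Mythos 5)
Your proposal is correct and is essentially the argument the paper intends: the corollary is stated as an immediate consequence of Theorem \ref{PID}, obtained exactly by viewing $K[X_1,X_2,X_3,X_4]$ as $R[X_2,X_3,X_4]=R^{[3]}$ over the PID $R=K[X_1]$ and applying part (ii). Your additional checks (that $DX_1=0$ makes $D$ an $R$-derivation, that irreducibility depends only on the ideal $(DB)$ and not on the base ring, and that irreducibility rules out $D=0$) are the right routine verifications.
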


By a result of Bhatwadekar and Daigle \cite[Proposition 4.13]{BD}, we know that over a Dedekind domain $R$ 
containing $\bQ$, the kernel of any locally nilpotent $R$-derivation of $R^{[3]}$ is necessarily finitely generated. 
We now show that if $D$ is a nice derivation, then the kernel is generated by at most three elements.

\begin{prop}\label{DD}
Let $R$ be a Dedekind domain containing $\bQ$ and $B:=R[X,Y,Z]=R^{[3]}$. Let $D\in LND_R(B)$ such that 
$D$ is irreducible and $D^{2}X=D^{2}Y=D^{2}Z=0$. Let $A:=Ker$ $D$. Then the following hold:
\begin{enumerate}
 \item [\rm (i)] $A$ is generated by at most 3 elements. 
 \item [\rm (ii)] Moreover, if $D$ is fixed-point free, then $rank$ $D$ $<3$ and $D$ has a slice. In particular, 
 $rank$ $D=1$.
\end{enumerate}
\end{prop}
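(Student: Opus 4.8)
The plan is to reduce everything to the PID case (Theorem~\ref{PID}) by localising at maximal ideals of $R$, and then to patch the local information together using the structure theory of Dedekind domains together with the local-global results recorded in Section~2.

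First I would extend $D$ to an $LND$ $\overline{D}$ of $L[X,Y,Z]$, where $L=\operatorname{Frac}(R)$, and apply Theorem~\ref{Wang1} to obtain a linear form $U\in L[X,Y,Z]$ in $\operatorname{Ker}\overline{D}$; clearing denominators we may take $U=lX+mY+nZ$ with $l,m,n\in R$, and after dividing by $\gcd$ we may assume the ideal $(l,m,n)R$ has no non-unit common factor. Over a Dedekind domain a unimodular-modulo-invertible-ideal row need not be completable, but localising at each maximal ideal $\mathfrak{m}$ of $R$ the row $(l,m,n)$ \emph{is} unimodular over the PID $R_\mathfrak{m}$, so by Theorem~\ref{PID} applied over $R_\mathfrak{m}$ we get, for each $\mathfrak{m}$, a coordinate system of $B_\mathfrak{m}$ in which $A_\mathfrak{m}=R_\mathfrak{m}[U,\,g V-f W]\cong R_\mathfrak{m}^{[2]}$. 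In particular $A_\mathfrak{m}$ is a symmetric algebra over $R_\mathfrak{m}$ for every $\mathfrak{m}$. Since $A$ is finitely generated over $R$ (by \cite[Proposition 4.13]{BD}), hence finitely presented, Theorem~\ref{bcw} gives $A\cong\operatorname{Sym}_R(M)$ for some finitely presented $R$-module $M$; and since each $M_\mathfrak{m}$ is free of rank $2$ over the PID $R_\mathfrak{m}$ (as $A_\mathfrak{m}\cong R_\mathfrak{m}^{[2]}$), $M$ is a rank-two projective $R$-module. Over a Dedekind domain a rank-two projective module is isomorphic to $R\oplus I$ for an ideal $I$, so $M$ is generated by at most $3$ elements, whence $A=\operatorname{Sym}_R(M)$ is generated by at most $3$ elements over $R$ in the natural grading --- this proves (i). (One should be a little careful that the $U$ coming from Theorem~\ref{Wang1} and the $U$ coming from Theorem~\ref{PID} over each $R_\mathfrak{m}$ agree; they do, because both are forced to be $L$-proportional to the unique linear form in $\operatorname{Ker}\overline D$ up to scalar, which is why the single global linear form $U$ works in every localisation.)

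For (ii), suppose $D$ is fixed-point free, i.e.\ the ideal $(DB)=B$. Then $D$ is certainly irreducible, so the setup of (i) applies and $M=R\oplus I$ as above. The point is that fixed-point freeness forces $I$ to be principal: localising at $\mathfrak{m}$, Theorem~\ref{PID}(iv) says that over $R_\mathfrak{m}$ the elements $f,g$ are comaximal (the regular-sequence alternative is excluded because a regular sequence $f,g$ in $B_\mathfrak{m}$ would put $(f,g)B_\mathfrak{m}$ inside a height-$2$ prime, contradicting $(DB_\mathfrak{m})=B_\mathfrak{m}$ since $(DB_\mathfrak{m})\subseteq (f,g)B_\mathfrak{m}$), and hence $\operatorname{rank}\overline D=1$ locally and $B_\mathfrak{m}=A_\mathfrak{m}^{[1]}$, i.e.\ $M_\mathfrak{m}\cong R_\mathfrak{m}^{[1]}$ --- wait, that would make $M$ rank one, contradicting $\td_A B=1$ with $A\cong R^{[2]}$. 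Let me restate this correctly: fixed-point freeness gives, over each $R_\mathfrak{m}$, that $B_\mathfrak{m}=A_\mathfrak{m}^{[1]}$, and since $A_\mathfrak{m}\cong R_\mathfrak{m}^{[2]}$ we have $B_\mathfrak{m}=R_\mathfrak{m}^{[3]}$ with $D_\mathfrak{m}$ fixed-point free, so by Theorem~\ref{fpf} $D_\mathfrak{m}$ has a slice over $R_\mathfrak{m}$; equivalently $B_\mathfrak{m}$ is a trivial rank-one symmetric algebra over $A_\mathfrak{m}$. Thus $B$, viewed as a finitely presented $A$-algebra, is locally (over the maximal ideals of the base) a polynomial ring in one variable; applying Theorem~\ref{bcw} over $A$ we get $B\cong\operatorname{Sym}_A(N)$ for a finitely presented $A$-module $N$ which is locally free of rank $1$, hence a rank-$1$ projective $A$-module. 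The remaining task is to show $N$ is free. Since $D$ is fixed-point free on $B=R^{[3]}$, the image ideal $(DB)$ is the unit ideal, and in the language of Serre's Splitting Theorem and Bass's Cancellation Theorem (Theorems~\ref{sst},~\ref{bass}) applied over the Noetherian ring $A$ of dimension $\dim R+2=3$: a rank-$1$ projective is generally not cancellative in dimension $3$, so the direct route fails --- instead I would argue that the class of $N$ in $\operatorname{Pic}(A)$ is pulled back from $\operatorname{Pic}(R)$ under $R\hookrightarrow A$ via the section given by evaluation (using $A\cong\operatorname{Sym}_R(M)$, so $\operatorname{Pic}(A)\cong\operatorname{Pic}(R)$), and the corresponding ideal of $R$ is exactly the ideal $(DB)\cap R$-type invariant, which is the unit ideal by fixed-point freeness; hence $N$ is free and $D$ has a slice $s$ over $A$, so $B=A[s]=A^{[1]}$, giving $\operatorname{rank}D<3$. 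Finally $\operatorname{rank}D=1$: since $D$ has a slice, $B=A[s]=A^{[1]}$ and $D=\partial/\partial s$, and because $A\cong R^{[2]}$ we may choose a coordinate system $(s,a_1,a_2)$ of $B$ with $a_1,a_2\in A$, so by definition $\operatorname{rank}D\le 1$, hence $=1$ as $D\ne 0$.

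The main obstacle is the final freeness step in (ii): knowing that $B$ is \emph{locally} (over $R$, or over $A$) a polynomial ring in one variable does not immediately give a global slice, because over a Dedekind domain the relevant rank-one projective module might be non-trivial, and Bass cancellation does not apply to rank-one modules over a $3$-dimensional ring. The resolution has to use fixed-point freeness in an essential way --- it is precisely the hypothesis that kills the Picard obstruction --- and making the identification $\operatorname{Pic}(A)\cong\operatorname{Pic}(R)$ together with the computation that the obstruction class equals the class of $(DB)\cap R$ (which is trivial) is the delicate part that I would want to write out carefully, likely invoking Theorem~\ref{gr} for the graded ring $A=\operatorname{Sym}_R(M)$ to descend the $A$-module $N$ to an $R$-module and then reading off triviality from the local comaximality statements of Theorem~\ref{PID}(iv). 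The parts (i) and the reduction to the PID case are routine given Theorems~\ref{PID},~\ref{bcw}, and \cite[Proposition 4.13]{BD}; only this last descent of a line bundle is genuinely at issue.
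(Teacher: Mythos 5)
Your part (i) is essentially the paper's argument: localise to reduce to Theorem \ref{PID}, apply Theorem \ref{bcw} to write $A\cong Sym_R(Q)$ with $Q$ projective of rank $2$, and split $Q\cong R\oplus Q_1$ over the Dedekind domain $R$. (Your worry about the linear form $U$ matching across localisations is unnecessary --- Theorem \ref{bcw} only needs that each $A_{\m}$ be \emph{some} symmetric algebra over $R_{\m}$ --- and the claimed uniqueness of the linear form in $Ker$ $\overline{D}$ is false in general, e.g.\ when $\overline{D}$ has rank $1$; but none of this affects (i).)

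The genuine gap is the last step of (ii). You correctly reach $B\cong Sym_A(N)$ with $N$ a rank-one projective $A$-module which, by Theorem \ref{gr}, is extended from a rank-one projective $P_1$ over $R$; but your mechanism for killing its class --- identifying it with ``the class of $(DB)\cap R$'', which is trivially the unit ideal --- is not an argument, and no such identification holds. What is actually true is that $B=Sym_R(R\oplus Q_1\oplus P_1)=R^{[3]}$ forces $R\oplus Q_1\oplus P_1$ to be stably free (\cite[Lemma 1.3]{EH}), hence free by Theorem \ref{bass}, so that $[P_1]=-[Q_1]$ in $\operatorname{Pic}(R)$; proving $[P_1]=0$ directly is therefore equivalent to proving $A=R^{[2]}$, which is exactly what is at stake (and your closing step silently assumes $A\cong R^{[2]}$, which part (i) does not provide). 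The paper sidesteps the need for $P_1$ itself to be trivial: from the freeness of the rank-$3$ module it cancels the free summand $Rf$ (Theorem \ref{bass} again) to get $Q_1\oplus P_1$ free of rank $2$, hence $B=S^{[2]}$ with $S=R[f]$ and $f\in A$, which already gives $rank$ $D<3$; Theorem \ref{fpf} applied over $S$ then produces a slice, and Hamann's Theorem \ref{ham} applied to $A^{[1]}=S^{[2]}$ yields $A=S^{[1]}=R^{[2]}$ and $rank$ $D=1$. The individual triviality of $[P_1]$ and $[Q_1]$ only falls out at the very end; your outline has no valid route to it beforehand.
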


\begin{proof}
(i) By Theorem \ref{PID}, $A_{\p}={R_{\p}}^{[2]}$ for all $\p \in Spec(R)$. 
Hence by Theorem \ref{bcw}, $A \cong Sym_R(Q)$ for some rank $2$ projective $R$-module $Q$. 
Since $R$ is a Dedekind domain by Theorem \ref{sst}, $Q \cong Q_{1} \oplus M$ where $Q_{1}$ is a rank $1$ 
projective $R$-module and $M$ is a free $R$-module of rank $1$. 
Again since $R$ is a Dedekind domain $Q_{1}$ is generated by at most 2 elements.
Hence $A$ is generated by at most 3 elements.

(ii) Now assume $D$ is fixed-point free. Let $DX=f_1$, $DY=f_2$ and $DZ=f_3$. Then, by Theorem \ref{st}, 
$B_{f_{i}}={A_{f_{i}}}^{[1]}$ for each $i \in \{1,2,3\}$. Since $(f_1,f_2,f_3)B=B$ we have 
$B_{\tilde{\p}}={A_{\tilde{\p}}}^{[1]}$, for each $\tilde{\p} \in Spec(A)$. Hence, by Theorem \ref{bcw}, 
$B=Sym_{A}(P)$, where $P$ is a projective $A$-module of rank $1$. Now for each $\p \in Spec(R)$, $P_{\p}$ is an 
$A_{\p}$-module and as $A_{\p}={R_{\p}}^{[2]}$, we have $P_{\p}$ is a free $A_{\p}$-module since $R_{\p}$ is a discrete 
valuation ring and hence extended from $R_{\p}$. Therefore, by Theorem \ref{gr}, $P$ is extended from $R$. 
Let $P=P_{1} \otimes_{R} A$, where $P_1$ is a projective $R$-module of rank $1$. Hence 
$B=Sym_{A}(P)=Sym_{R}(M \oplus Q_{1} \oplus P_{1})$, where $M$ is a free $R$-module of rank $1$. 
Since $B=R^{[3]}$, $M \oplus Q_{1} \oplus P_{1}$ is a free $R$-module of rank $3$ \cite[Lemma 1.3]{EH}. 
By Theorem \ref{bass}, $Q_{1} \oplus P_{1}$ is free of rank $2$. Let $M=Rf$ and set $S:=R[f]$. Then $B={R[f]}^{[2]}$ and 
as $f \in A$, we have $rank$ $D$ $<3$. Now $B=S^{[2]}$ and $D \in LND_{S}(B)$ such that $D$ is fixed-point free. 
Hence, by Theorem \ref{fpf}, $D$ has a slice.

Let $B=R[f,g,h](=R^{[3]})$ and $s\in B$ be such that $Ds=1$. Then by Theorem \ref{st}, $B(=S^{[2]})=A[s](=A^{[1]})$. 
Hence by Theorem \ref{ham}, $A=S^{[1]}$. Let $A=R[f,t]$. Then $B=R[f,g,h]=R[f,t,s]$ and $f,t \in A$. So $rank$ $D=1$.
\end{proof}

The following example shows that Theorem \ref{PID} does not extend to a higher-dimensional regular UFD, not even to $k^{[2]}$.

\begin{ex}\label{UFDex}
{\em
Let $k$ be a field of characteristic zero 
and $R=k[a,b]=k^{[2]}$. Let $B=R[X,Y,Z](=R^{[3]})$ and $D$($\neq0$)$\in LND_{R}(B)$ be such that
$$DX=b, \quad DY=-a \quad \text{and} \quad DZ=aX+bY.$$ Let $u=aX+bY$, $v=bZ-uX$, and $w=aZ+uY$. Then $Du=Dv=Dw=0$, 
$D$ is irreducible and $D^{2}X=D^{2}Y=D^{2}Z=0$. Let $A=Ker$ $D$. We show that 
\begin{enumerate}
 \item [\rm (i)] $A=R[u,v,w]$.
 \item [\rm (ii)] $A=R[U,V,W]/(bW-aV-U^{2})$, where $R[U,V,W]=R^{[3]}$ and hence $A \neq R^{[2]}$.
 \item [\rm (iii)] $rank$ $D=3$.
\end{enumerate}

\begin{proof}
(i) Let $C:=R[u,v,w]$. We show that $C=A$. Clearly $C \subseteq A$. Note that, $B_{a}={C_{a}}^{[1]}$, so $C_{a}$ is 
algebraically closed in $B_{a}$. But $A$ is algebraically closed in $B$. So $A_{a}=C_{a}$. Similarly $A_{b}=C_{b}$. 
Since $a,b$ is a regular sequence in $C$, $C_{a} \cap C_{b}=C$. Therefore $A \subseteq A_{a} \cap A_{b}=C_{a} \cap C_{b}=C$.

(ii) Let $\phi : R[U,V,W](= R^{[3]}) \twoheadrightarrow A$ 
be the $R$-algebra epimorphism such that $\phi(U)=u$, $\phi(V)=v$ and $\phi(W)=w$. Then $(bW-aV-U^{2}) \subseteq Ker$ $\phi$ 
and $bW-aV-U^{2}$ is an irreducible polynomial of the UFD $R[U,V,W]$. Now 
$\td_{R}{(R[U,V,W]/(bW-aV-U^{2}))}=2=\td_{R}{A}$. Hence $A \cong R[U,V,W]/(bW-aV-U^{2})$. 
Let $F=bW-aV-U^{2}$. Now $(\frac{\partial F}{\partial U},\frac{\partial F}{\partial V}, 
\frac{\partial F}{\partial W},F)R[U,V,W] \neq R[U,V,W]$. So $A$ is not a regular ring, in particular, $A \neq R^{[2]}$.

(iii) $rank$ $D$ $=3$ by Lemma \ref{UFD}.
\end{proof}
}
\end{ex}




\section{Quasi-nice Derivations}
In this section we discuss quasi-nice derivations. Let $k$ be a field of characteristic zero, $R$ a $k$-domain, $B:=R^{[n]}$ 
and $m$ be a positive integer $\leq n$. We shall call a quasi-nice $R$-derivation of $B$ to be {\it $m$-quasi} if, for some 
coordinate system $(T_1,T_2, \dots,T_n)$ of $B$, $D^{2}(T_i)=0$ for all $i \in \{1,\dots,m\}$. Thus for any two positive 
integers $r$ and $m$ such that $1\leq m< r\leq n$, it is easy to see that an $r$-quasi derivation is also an $m$-quasi 
derivation. We shall call an  $m$-quasi derivation to be {\it strictly $m$-quasi} if it is not $r$-quasi for any positive 
intger $r > m$.

Over a field $K$, Z. Wang \cite[Theorem 4.7 and Remark 5]{W} has proved the following result for $2$-quasi derivations.
\begin{thm}\label{Wang2}
Let $K$ be a field of characteristic zero 
and $K[X,Y,Z]=K^{[3]}$. Let $D(\neq0) \in LND_{K}(K[X,Y,Z])$  be such that $D$ is irreducible and 
 $D^{2}X=D^{2}Y=0$. Then one of the following holds:
\begin{enumerate}
\item[\rm (I)]There exists a coordinate system ($L_1,L_2,Z$) of $K[X,Y,Z]$, where $L_1$ and $L_2$ are linear forms in $X$ and 
$Y$ such that
\begin{enumerate}
\item[\rm (i)]$DL_1=0$.
\item[\rm (ii)]$DL_2 \in K[L_1]$.
\item[\rm (iii)]$DZ \in K[L_1,L_2]=K[X,Y]$.
\end{enumerate}
In this case, $rank$ $D$ can be either $1$ or $2$.
\item[\rm (II)]There exists a coordinate system ($V,X,Y$) of $K[X,Y,Z]$, such that $DV=0$ and $DX, DY \in K[V]$. 
In particular, $rank$ $D=1$.
\end{enumerate}
Conversely if $D \in Der_K(K[X,Y,Z])$ satisfies (I) or (II), then $D \in LND_K(K[X,Y,Z])$ and $D^{2}X=D^{2}Y=0$.
\end{thm}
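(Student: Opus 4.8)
I would prove the two implications separately; the converse is routine and the forward direction carries the content. \emph{For the converse}: in case (I) the triple $(L_1,L_2,Z)$ is a coordinate system of $K[X,Y,Z]$ with respect to which $D$ is triangular, since it annihilates $L_1$, sends $L_2$ into $K[L_1]$ and $Z$ into $K[L_1,L_2]$; hence $D\in LND_K(K[X,Y,Z])$. Moreover $X$ and $Y$ are $K$-linear combinations of $L_1$ and $L_2$, so $DX,DY\in K[L_1]\subseteq Ker\,D$, whence $D^2X=D^2Y=0$. Case (II) is verified the same way, with $(V,X,Y)$ in place of $(L_1,L_2,Z)$.

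\emph{For the forward implication}, set $A:=Ker\,D$ and $a:=DX$, $b:=DY$, $c:=DZ$; the hypothesis $D^2X=D^2Y=0$ says exactly that $a,b\in A$. By Miyanishi's theorem $A=K^{[2]}$, and since $A$ is factorially closed in $B:=K[X,Y,Z]$, neither $X$ nor $Y$ divides $a$ or $b$ (a divisibility $X\mid a$, say, would force $X\in A$, contradicting $a\ne0$). First I would dispose of the case $a=0$ (the case $b=0$ being symmetric). Then $X\in A$, so by Theorem \ref{aeh} applied with $C=K[X]$ we get $A=K[X]^{[1]}=K[X,G]$, and $D$ is an irreducible locally nilpotent $K[X]$-derivation of $B=K[X]^{[2]}$ with $D^2Y=0$; Lemma \ref{UFDWang}(i) then gives $DY\in K[X]$ and $DZ\in K[X,Y]$. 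Taking $L_1:=X$ and $L_2:=Y$ we are in case (I) (and in case (II) as well when $DY\in K$, by Theorem \ref{st}).

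The substantive case is $a\ne0$ and $b\ne0$. The starting points are: $D(X/a)=1$, so $X/a$ is a slice of $D$ on $B_a$ and $B_a=A_a[X/a]$ (similarly $B_b=A_b[Y/b]$); and $\ell:=bX-aY$ lies in $A$ and is nonzero — the membership because $D\ell=b(DX)-a(DY)=0$, the nonvanishing because $bX=aY$ would force $X\mid a$ in the UFD $B$. Feeding $D^2Y=0$ into $B_a=A_a[X/a]$ one also obtains $Y=(b/a)X-\ell/a$ there. The plan is to use these relations, together with the fact that $A=K^{[2]}$ is a polynomial ring, to establish the dichotomy on which everything hinges: either $A$ contains a nonzero linear form of $\{X,Y\}$, or there exists $V\in A$ such that $(V,X,Y)$ is a coordinate system of $B$ with $DX,DY\in K[V]$. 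In the first alternative I would take that linear form as $L_1$, complete it linearly to $(L_1,L_2)$, and rerun the previous paragraph's argument over the UFD $K[L_1]\subseteq A$ via Lemma \ref{UFDWang}(i), obtaining $DL_2\in K[L_1]$ and $DZ\in K[L_1,L_2]=K[X,Y]$ — this is case (I); the second alternative is case (II).

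Once the dichotomy is available the rank assertions are formal. In case (I), $(L_2,Z,L_1)$ is a coordinate system with $K[L_1]\subseteq A$, so $1\le rank\,D\le2$. In case (II), $\gcd_{K[V]}(DX,DY)$ must be a unit: if not, then since $DV=0$ gives $DZ=(\partial_XZ)DX+(\partial_YZ)DY$, it would divide $DX$, $DY$ and $DZ$, contradicting irreducibility; hence $DX,DY$ are comaximal in $B$, so $D$, regarded as a $K[V]$-derivation of $B=K[V]^{[2]}$, is fixed-point free and has a slice by Theorem \ref{fpf}, whence $B=A^{[1]}=K[V,X,Y]$ admits a coordinate that is a slice and $rank\,D=1$. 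The main obstacle is precisely the dichotomy in the case $a,b\ne0$: showing that $Ker\,D=K^{[2]}$ must sit inside $B=K^{[3]}$ either so as to contain a linear form in $\{X,Y\}$ or so as to complete $\{X,Y\}$ to a coordinate system. Proving this will require pushing the slice descriptions of $B_a$ and $B_b$ and the identity $bX-aY\in Ker\,D$ considerably further than the preliminary observations above.
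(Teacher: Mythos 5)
First, a point of reference: the paper does not prove this statement at all. Theorem \ref{Wang2} is quoted verbatim from Z.~Wang \cite[Theorem 4.7 and Remark 5]{W}, so there is no in-paper proof to compare against; your attempt has to stand on its own. Much of what you wrote does stand: the converse is correct (triangularity with respect to $(L_1,L_2,Z)$, resp.\ $(V,X,Y)$, gives local nilpotence, and $DX,DY\in K[L_1]\subseteq Ker\,D$, resp.\ $K[V]\subseteq Ker\,D$, gives $D^2X=D^2Y=0$); the degenerate case $DX=0$ is correctly reduced to Lemma \ref{UFDWang}(i) over $K[X]$; and the rank assertions do follow formally from the dichotomy in the way you describe (in case (II) the coprimality of $DX,DY$ in the PID $K[V]$ forces comaximality in $B$, so Theorem \ref{fpf} applies).

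The genuine gap is the one you yourself flag: in the main case $DX\neq0\neq DY$ you never prove the dichotomy that either $Ker\,D$ contains a nonzero linear form of $\{X,Y\}$ or there is $V\in Ker\,D$ completing $(X,Y)$ to a coordinate system with $DX,DY\in K[V]$. That dichotomy \emph{is} the theorem; everything else in your write-up is a routine reduction to it or a consequence of it. The preliminary observations you assemble ($B_a=A_a[X/a]$, $bX-aY\in Ker\,D\setminus\{0\}$, $A=K^{[2]}$ by Miyanishi) do not by themselves force either alternative: a priori the kernel generator interacting with $X$ and $Y$ could be of high degree in both, and ruling that out is where Wang has to work. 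His argument proceeds by homogenizing $D$ with respect to a suitable weight/degree filtration and analyzing the leading forms of $DX$ and $DY$ and of the associated homogeneous locally nilpotent derivation; nothing of that mechanism (or any substitute for it) appears in your proposal. As written, the proof establishes the easy implications and a correct \emph{plan}, but the central claim remains an announced intention rather than a proof.
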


The following two examples illustrate the cases $rank$ $D=1$ and $rank$ $D=2$ for Part (I) of Theorem \ref{Wang2}. 
\begin{ex}\label{rank1}
 {\em Let $D \in LND_K(K[X,Y,Z])$ be such that $DX=DY=0$ and $DZ=1$. Then $rank$ $D=1$.
 }
\end{ex}

\begin{ex}\label{rank2}
{\em Let $D \in LND_K(K[X,Y,Z])$ such that $$DX=0, DY=X, DZ=Y.$$ Setting $R=K[X]$, we see that $D \in LND_R(R[Y,Z])$ and $D$
is irreducible. By Theorem \ref{df}, 
$D=\frac{\partial F}{\partial Z}\frac{\partial}{\partial Y}-\frac{\partial F}{\partial Y}
\frac{\partial}{\partial Z}$ for some $F \in R[Y,Z]=K[X,Y,Z]$ such that $K(X)[Y,Z]=K(X)[F]^{[1]}$, $gcd_{R[Y,Z]}
(\frac{\partial F}{\partial Y},\frac{\partial F}{\partial Z})=1$. Moreover $Ker$ $D=R[F]=R^{[1]}$. Setting $F=XZ-\frac
{Y^{2}}{2}$ we see $\frac{\partial F}{\partial Y}=-Y=-DZ$ and $\frac{\partial F}{\partial Z}=X=DY$.

Therefore $Ker$ $D=K[X,F]$. But $F$ is not a coordinate in $K[X,Y,Z]$ since $(\frac{\partial F}{\partial X},\frac{\partial F}
{\partial Y},\frac{\partial F}{\partial Z})K[X,Y,Z]\neq K[X,Y,Z]$. So $rank$ $D=2$.
 }
\end{ex}

We now address Question $2$ of the Introduction, which gives a partial generalisation of Theorem \ref{Wang2}.
\begin{prop}\label{partI}
Let $R$ be a PID containing $\mathbb{Q}$ with field of fractions $K$.
Let $D \in LND_{R}(R[X,Y,Z])$, where $R[X,Y,Z]=R^{[3]}$ such that $D$ is irreducible and $D^{2}X=D^{2}Y=0$. 
Let $\overline{D} \in LND(K[X,Y,Z])$ denote the extension of $D$ to $K[X,Y,Z]$. Let $A :=Ker$ $D$.
Suppose $\overline{D}$ satisfies condition (I) of Theorem \ref{Wang2}.
Then the following hold:
\begin{enumerate}
 \item [\rm (i)] $rank$ $D<3$.
 \item [\rm (ii)] There exists a coordinate system $(L_1,L_2,Z)$ of $B$, such that $L_1,L_2$ are linear forms
 in $X$ and $Y$, $DL_1=0$, $DL_2 \in R[L_1]$ and $DZ \in R[L_1,L_2]=R[X,Y]$. Moreover, $A=R[L_1,bZ+f(L_2)]$, where 
 $b \in R[L_1]$ and $f(L_2) \in R[L_1,L_2]$.
\end{enumerate}
\end{prop}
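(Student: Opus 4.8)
The plan is to reduce to the field case via the extension $\overline{D}$, use the hypothesis that $\overline{D}$ satisfies condition (I) of Theorem~\ref{Wang2} to fix a convenient linear coordinate system over $K$, and then descend back to $R$ using that $R$ is a PID (so that unimodular rows can be completed to invertible matrices, and a rank-one projective $R$-module occurring in a symmetric algebra is free).

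First I would apply Theorem~\ref{Wang2}(I) to $\overline{D}$: there is a coordinate system $(\overline{L_1},\overline{L_2},Z)$ of $K[X,Y,Z]$, with $\overline{L_1},\overline{L_2}$ linear forms in $X,Y$ over $K$, such that $\overline{D}\,\overline{L_1}=0$, $\overline{D}\,\overline{L_2}\in K[\overline{L_1}]$, and $\overline{D}Z\in K[\overline{L_1},\overline{L_2}]=K[X,Y]$. Since $\overline{L_1}$ is a linear form in $X,Y$ over $K$, after clearing denominators I may take $L_1=l_1X+l_2Y\in R[X,Y]$ with $\gcd_R(l_1,l_2)=1$; then $(l_1,l_2)$ is a unimodular row over the PID $R$, hence completes to $M\in GL_2(R)$, giving a linear coordinate system $(L_1,L_2)$ of $R[X,Y]$ with $L_1$ as above. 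Now $B=R[X,Y,Z]=R[L_1,L_2,Z]$, and $DL_1=0$ since $L_1$ is an $R$-multiple-free version of $\overline{L_1}\in\operatorname{Ker}\overline{D}$ and $A=\operatorname{Ker}\overline D\cap B$. In particular $\operatorname{rank} D<3$, which is (i). (Strictly, to get $DL_1 = 0$ one notes $L_1 \in K[X,Y] \cap B$ is a scalar multiple of $\overline{L_1}$ up to an element of $K^*$; multiplying by the appropriate element of $R$ does not affect the kernel condition.)

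For (ii), I now work over the PID $R_1:=R[L_1]$, which is again a domain (a polynomial ring over a PID) containing $\mathbb Q$, though not a PID; instead I use that $B=R_1[L_2,Z]=R_1^{[2]}$ and $D\in LND_{R_1}(B)$ since $DL_1=0$. I need $DL_2\in R_1=R[L_1]$, i.e. that $D^2L_2=0$. This should follow because $\overline{D}\,\overline{L_2}\in K[\overline{L_1}]=K[L_1]$, so $D^2L_2\in K[L_1]\cap B$; but $D^2L_2\in B$ and the kernel computation forces $D L_2$ to lie in $B\cap K[L_1]=R[L_1]$, hence $D^2L_2=0$. Wait — more carefully: $DL_2 = \overline{D}\,\overline{L_2}$ up to the scaling from $M$, and $\overline{D}\,\overline{L_2}\in K[L_1]$; since $DL_2\in B$, it lies in $K[L_1]\cap B = R[L_1]$, so $b:=DL_2\in R[L_1]$ and $D^2L_2=0$. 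Then $D$ is an irreducible element of $LND_{R_1}(R_1[L_2,Z])$ — irreducibility over $R_1$ follows from irreducibility over $R$ together with $DL_1=0$ — and $D^2L_2=0$, so Lemma~\ref{UFDWang}(i), applied with the UFD $R_1$ in place of $R$ and $(L_2,Z)$ in place of $(X,Y)$, gives $\operatorname{Ker} D=R_1[bZ+f(L_2)]=R[L_1,\,bZ+f(L_2)]$ with $b\in R_1=R[L_1]$ and $f(L_2)\in R_1[L_2]=R[L_1,L_2]$, and moreover $DL_2\in R_1$ and $DZ\in R_1[L_2]=R[L_1,L_2]=R[X,Y]$. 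This is exactly the assertion of (ii).

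The main obstacle I anticipate is the descent of the coordinate system and of the condition "$D^2 L_2 = 0$" from $K$ to $R$: Theorem~\ref{Wang2} only produces $\overline{L_1},\overline{L_2}$ over $K$, and one must verify that after clearing denominators and completing the unimodular row, the resulting $R$-coordinate $L_2$ still satisfies $D^2L_2=0$ and that $D$ remains irreducible as an $R[L_1]$-derivation (rather than becoming reducible, divisible by a prime of $R[L_1]$ dividing $b$). The irreducibility point is delicate: a priori $D$ irreducible over $R$ need not be irreducible over $R_1=R[L_1]$. I would handle this by the standard argument that $DB\subseteq(b)B$ for a non-unit $b\in R_1$ together with $DL_1=0$ would force $D$ to be reducible already over $R$ once one observes $b$ can be taken in $R$ (using that $\gcd$ of the coefficients involves only $R$, since $DX,DY\in R[X,Y]$ and $L_1,L_2$ are $R$-linear in $X,Y$) — or, more cleanly, note that Lemma~\ref{UFDWang}(i) as stated only needs $R_1$ to be a UFD and $D$ to be irreducible, and if $D$ is reducible over $R_1$ one replaces $D$ by its irreducible part $D'=\frac1c D$ for a suitable $c\in R_1$, runs the argument for $D'$, and checks $c\in R^*$ using $DX, DY \in R[X,Y]$ together with the explicit form $D = D(L_1) \partial_{L_1} + D(L_2)\partial_{L_2} + D(Z)\partial_Z$. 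This bookkeeping with divisibility in $R[L_1]$ versus $R$ is where the proof needs the most care.
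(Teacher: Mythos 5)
Your proposal is correct and follows essentially the same route as the paper: clear denominators in $\overline{L_1}$, complete the resulting unimodular row over the PID $R$ to a matrix in $GL(R)$ fixing $Z$ so that $(L_1,L_2,Z)$ is a coordinate system of $B$ with $L_1\in \operatorname{Ker}D$, and then apply Lemma~\ref{UFDWang}(i) over the UFD $R[L_1]$ to the pair $(L_2,Z)$. The two places where you anticipate difficulty are actually non-issues: $D^{2}L_2=0$ follows immediately from $D^{2}X=D^{2}Y=0$ since $L_2$ is an $R$-linear combination of $X$ and $Y$, and irreducibility of $D$ is defined intrinsically via $DB\subseteq(b)B$ for a non-unit $b$ of $B$ (whose units are $R^{*}$ regardless of base ring), so it is the same notion over $R$ and over $R[L_1]$.
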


\begin{proof}
(i) Let $(\overline{L_1},\overline{L_2},Z)$ be the coordinate system of $K[X,Y,Z]$ such that $\overline{D}$ satisfies condition (I) 
of Theorem \ref{Wang2}. Multiplying by a suitable nonzero constant from $R$, we can assume $\overline{L_1} \in R[X,Y]$. 
Let $\overline{L_1}=aX+bY$ where $a,b \in R$. 
Without loss of generality we can assume $gcd_{R}(a,b)=1$. Since $R$ is a PID, 
$(a,b,0)$ is a unimodular row in $R^{3}$
and hence can be completed to an invertible matrix (say $N$) in $GL_3(R)$.
Thus $\overline{L_1}$ is a coordinate in $R[X,Y,Z]$. As $\overline{L_1} \in Ker D = Ker \overline{D} \cap R[X,Y,Z]$, $rank$ $D$ is 
at most $2$ and 
hence $rank$ $D\leq 2< 3$.

(ii) Now set $L_1=\overline{L_1}$. Since $gcd_{R}(a,b)=1$, there exist $c,d \in R$ such that $ad-bc=1$. Hence we can choose 
$N$ as 
$
\begin{pmatrix}
a & b & 0\\
c & d & 0\\
0 & 0 & 1
\end{pmatrix}$. Then $N 
\begin{pmatrix}
X \\
Y \\
Z
\end{pmatrix}=
\begin{pmatrix}
L_1 \\
L_2 \\
Z
\end{pmatrix}$. \\
Now the proof follows from Part (i) of Lemma \ref{UFDWang}. 
\end{proof}

With the notation as above, if $\overline{D}$ satisfies condition (II) of Theorem \ref{Wang2}, $rank$ $D$ need not be $1$. The 
following example shows that $rank$ $D$ can even be $3$.

%

\begin{ex}\label{2var}
{\em Let $k$ be a field of characteristic zero, $R=k[t](=k^{[1]})$ with field of fractions $L$ and $B:=R[X,Y,Z](=R^{[3]})$. 
Let $D \in LND_{R}(B)$ be defined by $$DX=t, \quad DY=tZ+X^{2} \quad \text{and} \quad DZ=-2X.$$ Then $D$ is irreducible 
and $D^{2}X=D^{2}Y=0$. Let $\overline{D}$ denote the extension of $D$ to $L[X,Y,Z]$. Let $F=-GX+tY$ where $G=tZ+X^{2}$.  
Then $F^{2}-G^{3}=tH$, where $H=tY^{2}-2tX^{2}Z^{2}-2tXYZ-2X^{3}Y-X^{4}Z-t^{2}Z^{3} \in R[X,Y,Z]$. Set $C:=R[F,G,H]$. 
We show that 
\begin{enumerate}
 \item [\rm (i)] $\overline{D}$ satisfies condition (II) of Theorem \ref{Wang2}.
 \item [\rm (ii)] Then $C \cong R[U,V,W]/(U^{2}-V^{3}-tW)$, where $R[U,V,W]=R^{[3]}$ and hence 
 $C \neq R^{[2]}$.
 \item [\rm (iii)] $Ker$ $D=C$.
 \item [\rm (iv)] $rank$ $D=3$.
\end{enumerate}

\begin{proof}
(i) $L[X,Y,Z]=L[X,Y,G]$, $DG=\overline{D}G=0$ and $\overline{D}X,\overline{D}Y \in L[G]$. By Theorem \ref{df}, 
$Ker$ $\overline{D}=L[F,G](=L^{[2]})$.

(ii) Consider the $R$-algebra epimorphism $\phi:R[U,V,W] \twoheadrightarrow R[F,G,H]$($=C$), given by $\phi(U)=F$, 
$\phi(V)=G$ and $\phi(W)=H$. Clearly ($U^{2}-V^{3}-tW$) $\subseteq Ker$ $\phi$. Since $U^{2}-V^{3}-tW$ is an irreducible 
polynomial in $R[U,V,W]=k[t,U,V,W]=k^{[4]}$,  and $\td_{k} R[U,V,W]/(U^{2}-V^{3}-tW)=3$ we have 
$Ker$ $\phi= (U^{2}-V^{3}-tW)$ and hence $C \cong R[U,V,W]/(U^{2}-V^{3}-tW)$.

Set $f:=U^{2}-V^{3}-tW$. Then $C$ is not regular, since $(\frac{\partial f}{\partial t},\frac{\partial f}{\partial U},
 \frac{\partial f}{\partial V},\frac{\partial f}{\partial W},f) \subseteq (t,U,V,W)k[t,U,V,W]$. Thus $C$ is not regular; in 
 particular, $C \neq R^{[2]}$.

(iii) Let $A:=Ker$ $D$. Then $A=L[F,G] \cap R[X,Y,Z]$. We note that since $Ker~\overline{D}$ is factorially closed, 
$H \in Ker$ $\overline{D}$ and hence $H \in A$.  

$C_{t}=R_{t}[F,G,H]=R_{t}[F,G]$. Also $R_{t}[X,Y,Z]=R_{t}[X,Y,G]=R_{t}[X,F,G]$. $D$ extends to a locally nilpotent 
$R_{t}$-derivation (say $\tilde{D}$) of $R_{t}[X,Y,Z]$ and $\tilde{D}X \in {R_{t}}^{*}$. So by Theorem \ref{st}, 
$C_{t}=A_{t}$.

Clearly $C \subseteq A$. By Lemma \ref{domain}, it is enough to show that the map 
$C/tC \rightarrow A/tA$ is injective. Since $A$ is factorially closed in $B$, there exists an inclusion 
$A/tA \hookrightarrow B/tB$. So we will be done if we show the composite map 
 $\psi:C/tC \rightarrow B/tB$ is injective. For $g \in B$, let $\overline{g}$ denote the image of $g$ in $B/tB$.
 In $\psi(C/tC)$, $\overline{G}=\overline{X}^{2}$, $\overline{F}=-\overline{X}^{3}$ and 
$\overline{H}=-2\overline{X}^{3}\overline{Y}-\overline{X}^{4}\overline{Z}$. Since $\overline{X}$ and $\overline{Z}$ are algebraically 
independent over $k$, $\td_{k}\psi(C/tC)=2$. From (ii) it follows that $C/tC$ is an integral domain and 
$\td_{k}C/tC=\td_{k}\psi(C/tC)=2$. Hence $\psi$ is injective. So $C/tC \hookrightarrow A/tA$ and hence $C=A$ as desired.

(iv) By Lemma \ref{UFD}, $rank$ $D=3$.
\end{proof}
}
\end{ex}

Over a Dedekind domain $R$, we have the following generalisation of Proposition \ref{DD} and Proposition \ref{partI}.
\begin{prop}\label{dd2}
Let $R$ be a Dedekind domain containing $\bQ$ with field of fractions $K$, and $B:=R[X,Y,Z](=R^{[3]})$. Let $D \in LND_R(B)$ 
be irreducible and $D^{2}X=D^{2}Y=0$ and $\overline{D}$ denote the extension of $D$ to $K[X,Y,Z]$. Let $A:=Ker$ $D$. 
If $\overline{D}$ satisfies condition (I) of Theorem \ref{Wang2}, then the following hold:
\begin{enumerate}
 \item [\rm (i)] $A$ is generated by at most $3$ elements.
 \item [\rm (ii)] Moreover, if $D$ is fixed-point free, then $rank$ $D$ $<3$ and $D$ has a slice. In particular,
 $rank$ $D=1$.
\end{enumerate}
\end{prop}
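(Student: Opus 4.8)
The plan is to mimic the argument of Proposition \ref{DD}, replacing the input ``nice derivation'' by ``$2$-quasi derivation satisfying condition (I)'' and feeding in Proposition \ref{partI} in place of Theorem \ref{PID}. First I would localize at each prime $\p \in \Spec(R)$: since $R_\p$ is a PID (a DVR, or a field) and $\overline{D}$ satisfies condition (I), the hypotheses of Proposition \ref{partI} hold after base change to $R_\p$, so Proposition \ref{partI}(ii) gives $A_\p = R_\p[L_1, bZ + f(L_2)] = {R_\p}^{[2]}$ for every $\p$. Now Theorem \ref{bcw} (with Theorem \ref{gr} if needed to handle finite presentation) gives $A \cong \operatorname{Sym}_R(Q)$ for a finitely generated projective $R$-module $Q$ of rank $2$. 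By Serre's Splitting Theorem \ref{sst}, over the Dedekind domain $R$ we may write $Q \cong M \oplus Q_1$ with $M$ free of rank $1$ and $Q_1$ projective of rank $1$; since rank-$1$ projectives over a Dedekind domain are generated by at most $2$ elements, $Q$ is generated by at most $3$ elements, hence so is $A = \operatorname{Sym}_R(Q)$. This proves (i).

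For (ii), assume $D$ is fixed-point free. Writing $DX = f_1$, $DY = f_2$, $DZ = f_3$, the ideal $(f_1, f_2, f_3)B = B$, and by the Slice Theorem \ref{st} we have $B_{f_i} = {A_{f_i}}^{[1]}$ for each $i$; since the $f_i$ generate the unit ideal, $B_{\tilde\p} = {A_{\tilde\p}}^{[1]}$ for every $\tilde\p \in \Spec(A)$, so Theorem \ref{bcw} yields $B = \operatorname{Sym}_A(P)$ with $P$ projective of rank $1$ over $A$. Localizing $P$ at $\p \in \Spec(R)$: since $A_\p = {R_\p}^{[2]}$ with $R_\p$ a DVR, every rank-$1$ projective over $A_\p$ is free (indeed extended from $R_\p$, as $R_\p$ is a PID hence $A_\p$ is a UFD with trivial Picard group), so by the graded local-global Theorem \ref{gr}, $P$ is extended from $R$, say $P = P_1 \otimes_R A$ with $P_1$ rank-$1$ projective over $R$. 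Then $B = \operatorname{Sym}_R(M \oplus Q_1 \oplus P_1)$, and since $B = R^{[3]}$ this direct sum is free of rank $3$ by \cite[Lemma 1.3]{EH}; Bass's Cancellation Theorem \ref{bass} then gives $Q_1 \oplus P_1$ free of rank $2$. Writing $M = Rf$ and $S := R[f] \subseteq A$, we get $B = S^{[2]}$, so $D \in LND_S(B)$ is fixed-point free and hence has a slice $s$ by Theorem \ref{fpf}; in particular $\operatorname{rank} D < 3$. Finally, by the Slice Theorem \ref{st}, $B = A[s] = A^{[1]}$, and since $B = S^{[2]} = (S^{[1]})^{[1]}$ we also get, via Hamann's cancellation Theorem \ref{ham} applied over $S$, that $A = S^{[1]} = R^{[2]}$; choosing $A = R[f,t]$ we have $B = R[f, t, s]$ with $f, t \in A$, so $\operatorname{rank} D = 1$.

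The main obstacle I anticipate is the same subtle point as in Proposition \ref{DD}: verifying that $P$ is genuinely \emph{extended} from $R_\p$ at each prime (not merely free over $A_\p$), so that Theorem \ref{gr} applies—this needs that the Picard group and relevant $K$-theory of $A_\p = {R_\p}^{[2]}$ are controlled, which holds because $R_\p$ is a regular local ring of dimension $\le 1$ (a DVR or field) so $A_\p$ is a polynomial ring over it and projectives are free. A secondary technical point is confirming the finite presentation hypotheses needed for Theorems \ref{bcw} and \ref{gr}; these follow since $B$ is Noetherian, $A$ is finitely generated by part (i) (or by the Bhatwadekar--Daigle finiteness result \cite[Proposition 4.13]{BD}), and $B$ is a finitely generated $A$-algebra. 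Everything else is a direct transcription of the Proposition \ref{DD} argument with Proposition \ref{partI} supplying the crucial local structure $A_\p = {R_\p}^{[2]}$.
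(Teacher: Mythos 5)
Part (i) of your proposal is correct and is exactly the paper's argument: localize at $\p\in\Spec(R)$, invoke Proposition \ref{partI} (together with Lemma \ref{UFD}) to get $A_{\p}={R_{\p}}^{[2]}$, then run the $Sym_R(Q)$ argument from Proposition \ref{DD}(i).

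Part (ii), however, has a genuine gap. You transcribe the Proposition \ref{DD}(ii) argument verbatim, asserting $B_{f_i}={A_{f_i}}^{[1]}$ for $i=1,2,3$ where $f_3=DZ$. In Proposition \ref{DD} this works because $D$ is nice, so $D^{2}Z=0$ and $f_3=DZ$ lies in $A$; one then has a genuine local slice $Z$ and the Slice Theorem applies to the localization $B_{f_3}$. Here the hypothesis is only $D^{2}X=D^{2}Y=0$, so $f_3=DZ$ need not belong to $A$: localizing at $f_3$ does not in general yield a locally nilpotent derivation with kernel $A_{f_3}$, $Z/f_3$ is not a slice, and Theorem \ref{st} simply does not apply. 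Consequently you cannot conclude $B_{\tilde\p}={A_{\tilde\p}}^{[1]}$ for those $\tilde\p\in\Spec(A)$ containing both $f_1$ and $f_2$, and the hypothesis of Theorem \ref{bcw} (over the base $A$) is not verified. This is precisely the point where the extra structure of condition (I) must be used.

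The paper's fix is to localize at primes $\p$ of $R$ rather than at the images of the coordinates: since $R_{\p}$ is a PID, Proposition \ref{partI} produces a coordinate $L_1\in Ker\,D_{\p}$ with $B_{\p}=R_{\p}[L_1][L_2,Z]=S^{[2]}$ for $S=R_{\p}[L_1]$; as $D_{\p}$ is fixed-point free on this $S^{[2]}$, Theorem \ref{fpf} gives a slice, hence $B_{\p}={A_{\p}}^{[1]}$ for every $\p\in\Spec(R)$, which suffices to apply Theorem \ref{bcw} (any maximal ideal $\tilde\m$ of $A$ lies over some $\p$, and $B_{\tilde\m}$ is then a localization of $A_{\p}^{[1]}$). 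The remainder of your argument (freeness of $P_{\p}$ over $A_{\p}={R_{\p}}^{[2]}$, Theorem \ref{gr}, Bass cancellation, and the Hamann step giving $A=S^{[1]}$ and $rank\ D=1$) is sound once this local triviality of $B$ over $A$ is correctly established.
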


\begin{proof}
(i) By Proposition \ref{partI} and Lemma \ref{UFD}, $A_{\p}={R_{\p}}^{[2]}$ for all $\p \in Spec(R)$. Now the proof follows 
from the proof of Part (i) of Proposition \ref{DD}. 

(ii) For each $\p \in Spec(R)$, let $D_{\p}$ denote the extension of $D$ to $B_{\p}$. Then by Proposition \ref{partI} and 
Theorem \ref{fpf}, $D_{\p}$ has a slice. Thus $B_{\p}={A_{\p}}^{[1]}$ for all $\p \in Spec(R)$. Now the proof follows from the 
proof of Part (ii) of Proposition \ref{DD}.
\end{proof}

\begin{rem}\label{ufdex2}
{\em Example \ref{UFDex} shows that Proposition \ref{partI} does not extend to a higher-dimensional UFD, not even to 
$k^{[2]}$, where $k$ is a field of characteristic zero. In fact, in that example, taking $L_1=u$ and $L_2=cX+dY$ 
for some $c,d \in k[a,b]$ such that $ad-bc\neq 0$, we find that $\overline{D}$ satisfies condition (I) of Theorem \ref{Wang2} and 
considering the coordinate system $(u,X,Z)$, of $K[X,Y,Z]$ where $K$ is the field of fractions of $k[a,b]$, we also see 
that $\overline{D}$ satisfies condition (II).
}
\end{rem}

The following theorem of Daigle shows that over a field $k$ of characteristic zero, there does not exist any strictly 
$1$-quasi derivation of $k^{[3]}$ \cite[Theorem 5.1]{Da}.
\begin{thm}\label{d10}
Let $k$ be a field of characteristic zero , $B=k^{[3]}$ and $D:B\rightarrow B$ be an irreducible locally nilpotent derivation. 
We assume that some variable $Y$ of $B$ satisfies $DY \neq 0$ and $D^{2}Y=0$. Then there exist $X,Z$ such that 
$$ B=k[X,Y,Z], \quad DX=0, \quad DY \in k[X] \quad \text{and} \quad DZ \in k[X,Y].$$
\end{thm}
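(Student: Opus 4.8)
My approach is to first pin down the structure of $B$ near the local slice $Y$, then establish the key geometric fact that $Ker$ $D$ contains a variable of $B$ on which $DY$ depends, and finally read off the triangular form from Theorems \ref{aeh} and \ref{df}. Throughout, write $A:=Ker$ $D$.

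\textbf{Setup.} By Miyanishi's theorem on locally nilpotent derivations of $k^{[3]}$, $A=k^{[2]}$; recall also that $A$ is factorially closed in $B$ and $\td_{A}B=1$. Since $D^{2}Y=0$ we have $f:=DY\in A$, $f\neq 0$, so $Y/f$ is a slice of the extension of $D$ to $B_{f}$. If $f\in k^{*}$, Theorem \ref{st} gives $B=A[Y]=A^{[1]}=k^{[3]}$; writing $A=k[X,W]$, the system $(X,Y,W)$ satisfies $DX=0$, $DY=f\in k[X]$ and $DW=0\in k[X,Y]$, and we are done. So assume $f\notin k^{*}$. By Theorem \ref{st}, $B_{f}=A_{f}[Y]$ with $D|_{B_{f}}=f\,\frac{\partial}{\partial Y}$, and the Dixmier map $\pi_{Y/f}\colon B_{f}\to B_{f}$ is a retraction onto $A_{f}$ with kernel $YB_{f}$. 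Since $Y$ is prime in $B$ and $Y\nmid f$ (because $f\in A$, $A$ is factorially closed and $Y\notin A$), a short unique-factorisation argument shows $YB_{f}\cap B=YB$; hence $\pi_{Y/f}$ restricts to an embedding $B/YB\hookrightarrow A_{f}$, giving a chain $A\subseteq B/YB\subseteq A_{f}$ of domains with the same field of fractions. As $Y$ is a variable of $B$, $B/YB\cong k^{[2]}$.

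\textbf{The crux: a variable of $B$ inside $A$ carrying $f$.} I claim that $A$ has a coordinate system $(X,W)$ with $X$ a variable of $B$ and $f\in k[X]$. Consider the dominant morphism $\rho\colon\Spec B=\A^{3}\to\Spec A=\A^{2}$ induced by $A\hookrightarrow B$: its generic fibre is $\A^{1}$, and by the Setup $\rho$ is a trivial $\A^{1}$-bundle (with fibre coordinate $Y$) over $\A^{2}\setminus V(f)$, so all of its degeneration lies over the affine plane curve $C:=V(f)\subseteq\A^{2}$. Irreducibility of $D$ means $D\neq bD'$ for any non-unit $b$, which prevents $C$ from being forced to occur with multiplicity; combining this with the structure theory of affine $\A^{1}$-fibrations on $\A^{3}$ and of low-rank $\Ga_{a}$-quotients (the Daigle--Kaliman circle of results on affine rulings, together with an Abhyankar--Moh type rectification on $\A^{2}$), one shows that $C$ is a disjoint union of lines, each a coordinate line after an automorphism of $\A^{2}$, that $f$ depends only on one such coordinate $X$ of $A$, and that $X$ is in fact a variable of $B$. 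I expect this to be the main obstacle: it is the only point where genuinely geometric input about $\A^{3}$ is needed, and it is precisely what makes the hypothesis ``$D$ irreducible'' indispensable --- otherwise $D=g\,\frac{\partial}{\partial Y}$ with $g\in k^{[2]}$ arbitrary would be a counterexample.

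\textbf{The triangular form.} Let $X\in A$ be a variable of $B$ with $DX=0$ and $f\in k[X]$, as above. Theorem \ref{aeh}, applied with $C=k[X]$, gives $A=k[X]^{[1]}=k[X,G]$. Since $B=k[X]^{[2]}$ and $DX=0$, $D$ is a $k[X]$-derivation, so by Theorem \ref{df} we may write $B=k[X][P,Q]$ and $D=\alpha(\frac{\partial F}{\partial Q}\frac{\partial}{\partial P}-\frac{\partial F}{\partial P}\frac{\partial}{\partial Q})$ with $\alpha\in k[X][F]\setminus\{0\}$ and $Ker$ $D=k[X][F]$; irreducibility forces $\alpha\in k^{*}$, so after rescaling $D=\frac{\partial F}{\partial Q}\frac{\partial}{\partial P}-\frac{\partial F}{\partial P}\frac{\partial}{\partial Q}$ and $k[X,F]=A$. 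Over $k(X)$, the Rentschler normal form (Theorem \ref{df} over the field $k(X)$) puts $\overline{D}$ in the shape $\frac{\partial}{\partial\beta}$ for suitable coordinates $(F,\beta)$ of $k(X)[P,Q]$; since $D^{2}Y=0$, we get $Y=f\beta+Y_{0}$ with $Y_{0}\in Ker$ $\overline{D}=k(X)[F]$ and $f\in k[X]$. Using that $Y$ is a variable of $B$ and tracking these identifications, one extracts a coordinate $Z$ of $B$ completing $(X,Y)$ to a coordinate system with $DZ\in k[X,Y]$. Then $B=k[X,Y,Z]$, $DX=0$, $DY=f\in k[X]$ and $DZ\in k[X,Y]$, which is the assertion. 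I would expect this last step to require no further geometric input beyond the crux --- only the normal form above and the fact that $Y$ is itself a variable of $B$.
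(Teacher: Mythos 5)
First, a point of comparison: the paper does not prove Theorem \ref{d10} at all --- it is quoted verbatim from Daigle \cite{Da} (Theorem 5.1 there) --- so there is no internal proof to measure yours against. Judged on its own, your proposal has a genuine gap at exactly the point you yourself flag as ``the crux''. The assertion that $A$ admits a coordinate system $(X,W)$ with $X$ a variable of $B$ and $f=DY\in k[X]$ is not something that can be waved through: it already contains the statement that rank $D\le 2$ (which is Daigle's Theorem 5.2 and the very reason the paper cites \cite{Da} in its Introduction), together with the stronger normalisation that the degeneracy locus of $\Spec B\to\Spec A$ is a union of parallel coordinate lines of $A$ cut out by a single variable of $A$ that is simultaneously a variable of $B$. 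Your justification --- ``combining this with the structure theory of affine $\A^{1}$-fibrations \dots one shows'' --- is precisely the content of Daigle's paper, which rests on his classification of rank-two and rank-three locally nilpotent derivations and on properties of the plinth ideal; none of that machinery is available from the results quoted here (Theorems \ref{st}--\ref{gr}), and no self-contained argument is offered. In particular, irreducibility of $D$ does not ``prevent $C$ from being forced to occur with multiplicity'' in any sense that delivers the claim.

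The final step also contains an unjustified jump: even granting $B=k[X]^{[2]}$, $A=k[X,F]$ and the Rentschler normal form over $k(X)$, you still must show that $(X,Y)$ is a partial coordinate system of $B$ and that some complement $Z$ satisfies $DZ\in k[X,Y]$; ``tracking these identifications, one extracts a coordinate $Z$'' is not an argument, and note that $Y$ being a $k$-variable of $B$ does not immediately make it a $k[X]$-variable of $B$, which is what you would need in order to invoke Lemma \ref{UFDWang}(i) with $R=k[X]$. The surrounding scaffolding --- Miyanishi's kernel theorem, the Slice Theorem applied over $B_{f}$, Theorem \ref{aeh}, and the observation that irreducibility forces $\alpha\in k^{*}$ in Theorem \ref{df} --- is sound, but the two steps above are where the theorem actually lives, and they are missing.
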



We now present an example of a strictly $1$-quasi derivation of $R^{[3]}$ over a PID $R$ containing $\bQ$. Thus Theorem 
\ref{d10} does not extend to a PID. 
\begin{ex}\label{1var}
{\em Let $k$ be a field of characteristic zero, $R=k[t](=k^{[1]})$ and $B=R[X,Y,Z](=R^{[3]})$. Let $D \in LND_{R}(B)$ be 
such that $$ DX=t, \quad DY=X \quad \text{and} \quad DZ=Y.$$ Then $D$ is irreducible and $D^{2}X=0$.  Let $F:=2tY-X^{2}$, 
$G=3t^{2}Z-3tXY+X^{3}$ and $H=8tY^{3}+9t^{2}Z^{2}-18tXYZ-3X^{2}Y^{2}+6X^{3}Z$. Then $F^{3}+G^{2}=t^{2}H$. Set $C:=R[F,G,H]$. 
We now show the following:
\begin{enumerate}
 \item [\rm (i)] Then $C \cong R[U,V,W]/(U^{3}+V^{2}-t^{2}W)$, \\ where $R[U,V,W]=R^{[3]}$.
 \item [\rm (ii)] $Ker$ $D=C$.
 \item [\rm (iii)] There does not exist any coordinate system $(U_1,U_2,U_3)$ of $B$, such that $D^{2}(U_1)=D^{2}(U_2)=0$.
\end{enumerate}

\begin{proof}
(i) Consider the $R$-algebra epimorphism $\phi:R[U,V,W] \twoheadrightarrow R[F,G,H](=C)$, given by $\phi(U)=F$, $\phi(V)=G$ 
and $\phi(W)=H$. Clearly $(U^{3}+V^{2}-t^{2}W) \subseteq Ker$ $\phi$. Since $U^{3}+V^{2}-t^{2}W$ is an irreducible polynomial 
in $R[U,V,W]=k[t,U,V,W]=k^{[4]}$, and $\td_{k} R[U,V,W]/(U^{3}+V^{2}-t^{2}W)=3$ we have 
$Ker$ $\phi= (U^{3}+V^{2}-t^{2}W)$ and hence $C \cong R[U,V,W]/(U^{3}+V^{2}-t^{2}W)$. 

(ii) Let $A:=Ker$ $D$. Since $A$ is factorially closed in $B$, $H \in A$. $C_{t}=R_{t}[F,G,H]=R_{t}[F,G]$. 
Also $R_{t}[X,Y,Z]=R_{t}[X,F,G]$. $D$ extends to a locally nilpotent $R_{t}$-derivation (say $\tilde{D}$) of 
$R_{t}[X,Y,Z]$ and $\tilde{D}X \in {R_{t}}^{*}$. So by Theorem \ref{st}, 
$C_{t}=A_{t}$.

Clearly $C \subseteq A$. By Lemma \ref{domain}, it is enough to show that the map 
$C/tC \rightarrow A/tA$ is injective. Since $A$ is factorially closed in $B$, there exists an inclusion 
$A/tA \hookrightarrow B/tB$. So we will be done if we show the composite map 
 $\psi:C/tC \rightarrow B/tB$ is injective. For $g \in B$, let $\overline{g}$ denote the image of $g$ in $B/tB$.
 In $\psi(C/tC)$, $\overline{F}=-\overline{X}^{2}$, $\overline{G}=\overline{X}^{3}$ and 
$\overline{H}=-3\overline{X^{2}}\overline{Y^{2}}+6\overline{X^{3}}\overline{Z}$. Since $\overline{X}$, $\overline{Y}$ and $\overline{Z}$ are algebraically 
independent over $k$, $\td_{k}\psi(C/tC)=2$. From (ii) it follows that $C/tC$ is an integral domain and 
$\td_{k}C/tC=\td_{k}\psi(C/tC)=2$. Hence $\psi$ is injective. So $C/tC \hookrightarrow A/tA$ and hence $C=A$ as desired.

(iii) For $f \in B$, let $\alpha:=$ coefficient of $X$ in $f$, $\beta:=$ coefficient of $Y$ in $f$ and $deg$ $f:=$ 
total degree of $f$. Then $\frac{\partial f}{\partial X}(0,0,0)=\alpha$ and $\frac{\partial f}{\partial Y}(0,0,0)=\beta$. 
If $f \in A$, let $f=p(F,G,H)$ for some $p \in R^{[3]}$. Then $\frac{\partial f}{\partial X}=\frac{\partial p}{\partial F}\frac{\partial F}{\partial X}+
\frac{\partial p}{\partial G}\frac{\partial G}{\partial X}+\frac{\partial p}{\partial H}\frac{\partial H}{\partial X}$. 
We also have $$\frac{\partial F}{\partial X}=-2X, \quad \frac{\partial G}{\partial X}=-3tY+3X^{2} \quad \text{and} \quad
\frac{\partial H}{\partial X}=-18tYZ-6XY^{2}+18X^{2}Z.$$ Thus $\alpha=0$. Again, since 
$$\frac{\partial F}{\partial Y}=-2t, \quad \frac{\partial G}{\partial Y}=-3tX, \quad \text{and} \quad 
\frac{\partial H}{\partial Y}=16tY-18tXZ-6X^{2}Y,$$ similarly we have $\beta=\lambda t$, for some $\lambda \in R$. 
Let $U$ be a coordinate in $B$ such that $D^{2}U=0$. Since $U$ is a 
coordinate, there exist $a,b,c \in R$, not all $0$ and $V \in B$ with $deg$ $V\geqslant 2$ and no linear term, such that 
$U=aX+bY+cZ+V$. Then 
$DU=at+bX+cY+(\frac{\partial V}{\partial X})t+(\frac{\partial V}{\partial Y})X+(\frac{\partial V}{\partial Z})Y$.  
Let $\gamma:=$ coefficient of $X$ in $\frac{\partial V}{\partial X}$ and $\delta:=$ coefficient of $Y$ in 
$\frac{\partial V}{\partial X}$. Then coefficient of $X$ in $DU=b+\gamma t$ and coefficient of $Y$ in $DU=c+\delta t$ 
(We can ignore terms from $(\frac{\partial V}{\partial Y})X$ and $(\frac{\partial V}{\partial Z})Y$ since neither of them 
has any linear term). Thus $b+\gamma t=0$ and $c+ \beta t=\lambda t$ for some $\lambda \in R$. Therefore,
$b \in (t)R$ and $c \in (t)R$.

Let $(U_1,U_2,U_3)$ be a coordinate system of $B$ such that $D^{2}(U_1)=D^{2}(U_2)=0$ and let 
$U_i=a_{i}X+b_{i}Y+c_{i}Z+V_{i}$ where $a_{i},b_{i},c_{i}\in R$, not all $0$ and $V_{i} \in B$ with $deg$ $V_{i} \geqslant 2$ 
and no linear term, for $i=1,2$. Thus for each $i$, we have $b_{i},c_{i} \in (t)R$. For $f \in B$, let $\overline{f}$ denote its 
image in $B/tB(=k^{[3]})$. Since $B/tB=k[\overline{U_1},\overline{U_2},\overline{W}]$, $\overline{U_1},\overline{U_2}$ form a partial coordinate 
system in $B/tB$. But $\overline{U_{i}}=\overline{a_{i}}\overline{X}+V_{i}$ and since $\overline{V_{i}}$ has no linear term, 
$\overline{a_{i}}\neq 0$, for each $i$. Then $\overline{a_{2}}\overline{U_{1}}-\overline{a_{1}}\overline{U_{2}}$ has no linear term, but it is a 
coordinate in $B/tB$. Hence we have a contradiction.
\end{proof}

}
\end{ex}

\smallskip
\noindent

%
%
%
%
%

%
%

\noindent
{\small
{\bf Acknowledgement:} The authors thank Professor Amartya K. Dutta for asking Questions 1 and 2, going through the earlier 
drafts and suggesting improvements.The second author also acknowledges Department of Science and Technology for their Swarnajayanti award.}
}

{\small{

}}
\end{document}